\newcommand\cyr{%
\renewcommand\rmdefault{wncyr}%
\renewcommand\sfdefault{wncyss}%
\renewcommand\encodingdefault{OT2}%
\normalfont
\selectfont}
\DeclareTextFontCommand{\textcyr}{\cyr}
\DeclareFontFamily{OT1}{rsfs}{}
\DeclareFontShape{OT1}{rsfs}{n}{it}{<-> rsfs10}{}
\DeclareMathAlphabet{\mathscr}{OT1}{rsfs}{n}{it}
\numberwithin{equation}{section}
\newtheorem{theorem}{Theorem}[section]
\newtheorem{lemma}[theorem]{Lemma}
\newtheorem{proposition}[theorem]{Proposition}
\newtheorem{question}{Question}
\theoremstyle{definition}
\newtheorem{definition}[theorem]{Definition}
\newtheorem{remark}[theorem]{Remark}
\theoremstyle{remark}
\newtheorem{acknowledgement}{Acknowledgement}
\newcommand{\im}{\operatorname{Im}}
\renewcommand{\ker}{\operatorname{Ker}}
\newcommand{\Div}{\operatorname{Div}}
\newcommand{\id}{\operatorname{id}}
\newcommand{\Ext}{\operatorname{Ext}}
\newcommand{\Tor}{\operatorname{Tor}}
\newcommand{\Hom}{\operatorname{Hom}}
\newcommand{\coker}{\operatorname{Coker}}
\newcommand{\pr}{\operatorname{pr}}
\newcommand{\Fil}{\operatorname{Fil}}
\newcommand{\MSpec}{\operatorname{M-Spec}}
\newcommand{\vpl}{\operatornamewithlimits{\varprojlim}}
\newcommand{\vil}{\operatornamewithlimits{\varinjlim}}
\newcommand{\fm}{\frak{m}}
\begin{document}
\title[Another proof of the almost purity theorem for perfectoid valuation rings]
{Another proof of the almost purity theorem for perfectoid valuation rings}

\author[S.Ishiro]{Shinnosuke Ishiro}
\address{Department of Mathematics, College of Humanities and Sciences, Nihon University, Setagaya-ku, Tokyo 156-8550, Japan}
\email{shinnosukeishiro@gmail.com}

\author[K.Shimomoto]{Kazuma Shimomoto}
\address{Department of Mathematics, College of Humanities and Sciences, Nihon University, Setagaya-ku, Tokyo 156-8550, Japan}
\email{shimomotokazuma@gmail.com}
\thanks{2020 {\em Mathematics Subject Classification\/}: 13A18, 13A35, 13B40, 14G45}

\keywords{Almost purity, Frobenius pull-back formula, normalized length, valuation ring.}

%\subjclass{13}
%\subjclass[2000]{Primary 13-XX}
%\subjclass[2000]{Primary ; Secondary}
%\date{\today \, (\printtime)}
%\date{\today}

\begin{abstract}
The almost purity theorem is central to the geometry of perfectoid spaces and has numerous applications in algebra and geometry. This result is known to have several different proofs in the case that the base ring is a perfectoid valuation ring. We give a new proof by exploiting the behavior of Faltings' normalized length under the Frobenius map. 
\end{abstract}

\maketitle

%%%%Introduction%%%%%%%%%%%%%%%%%%%%%%%%%%%%%%%%%%%%%%%%%%%%%%%%%%%%%%%%%

\section{Introduction}

The almost purity theorem is a key to many foundational results for perfectoid spaces and it has been used to solve many outstanding problems. Recall that the almost purity theorem for perfectoid valuation rings was an intermediate step toward Scholze's proof of the almost purity theorem for general perfectoid rings with the help of algebraization theorems in the style of Elkik and Gabber-Ramero. Now let us turn our attention to the prehistory of the appearance of Scholze's perfectoid spaces. The \textit{normalized length} was introduced in \cite{Fa02} and played a role in the proof of Faltings' approach to the almost purity theorem. However, to the best of authors' knowledge, the normalized length was used only to calculate the local cohomology of certain big rings with Frobenius action (see also \cite{Shi07}). Our aim is to give an application of the normalized length to offer a new proof of the almost purity theorem for perfectoid valuation rings.

\begin{theorem}
Let $V$ be a perfectoid valuation ring (see Definition \ref{perfdval}) with field of fractions $K$, and let $W$ be the integral closure of $V$ in a finite separable extension of fields $K \to L$. 
\begin{enumerate}
\item
The Frobenius endomorphism on $W/pW$ is surjective.

\item
$V \to W$ is faithfully flat and almost finite \'etale.
\end{enumerate}
\end{theorem}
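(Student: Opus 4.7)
The plan is to reduce both assertions to vanishing statements for Faltings' normalized length $\lambda_V$, exploiting the Frobenius pull-back formula as advertised in the abstract. Fix a pseudo-uniformizer $\pi \in V$ with $\pi^p \mid p$ and a compatible system $\{\pi^{1/p^n}\}_{n \ge 0}$ in $V$; these exist because $V$ is perfectoid. Recall that $\lambda_V$ is a real-valued additive function on sufficiently nice torsion $V$-modules, normalized so that $\lambda_V(V/\pi^{a/p^n} V) = a/p^n$. The key input I will assume is an identity of the shape $\lambda_V(F^{\ast} M) = p \cdot \lambda_V(M)$ for Frobenius-compatible $M$, which forces $\lambda_V(M) = 0$ when combined with an additional Frobenius-equivariant structure on $M$ itself.

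Flatness of $V \to W$ is immediate: $W \subseteq L$ is torsion-free over the valuation ring $V$, and torsion-free modules over valuation rings are flat. Faithful flatness follows because $W$ is integral over $V$, so the maximal ideal $\mathfrak{m}_V$ extends to a proper ideal of $W$, ensuring $M \otimes_V W \ne 0$ for every nonzero $V$-module $M$.

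For part (1), let $C$ be the cokernel of the Frobenius $F \colon W/pW \to W/pW$. I would approximate $C$ by finitely presented $V$-modules $C_0$ arising from a finitely generated $V$-submodule $W_0 \subseteq W$ with $W_0 \otimes_V K = L$, and then apply the Frobenius pull-back formula to control $\lambda_V(C_0)$. The perfectoid property of $V$ provides the base case, and separability of $L/K$ ensures that no extra length is produced when passing from $V$ to $W$. Passing to the limit gives $\lambda_V(C) = 0$, and using that $W$ is integrally closed in $L$ one upgrades this almost-zero conclusion to $C = 0$, yielding surjectivity of Frobenius on $W/pW$.

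For part (2), consider the $V$-linear trace map $\tau_{W/V} \colon W \to \Hom_V(W, V)$ and let $D$ be its cokernel; separability of $L/K$ guarantees that $D$ is a torsion $V$-module. Compatibility of the trace with Frobenius together with the pull-back formula yields an inequality of the shape $p \cdot \lambda_V(D) \le \lambda_V(D)$, which forces $\lambda_V(D) = 0$. This is equivalent to saying that $\pi^{1/p^n}$ annihilates $D$ for every $n \ge 0$, which is precisely the almost étale condition. The main obstacle I anticipate is the rigorous execution of the Frobenius pull-back formula for modules attached to $W$: since $W$ is typically only almost finitely generated over $V$, one must approximate carefully by finitely presented $V$-submodules and verify that both the trace pairing and the Frobenius respect these approximations compatibly with $\lambda_V$, so that the final limit computation is valid.
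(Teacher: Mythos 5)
Your plan for part (1) is close in spirit to the paper's, but it has a gap at the crucial final step, and your sketch for part (2) misses what is actually the central mechanism of the argument in mixed characteristic.

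For part (1), computing $\lambda_\infty$ of the cokernel $N$ of the injection $W/\varpi W \hookrightarrow (W/pW)^{[F]}$ and applying the Frobenius pull-back formula does give $\lambda_\infty$ almost zero, and the filtration of $W/pW$ by powers of $\varpi$ with graded pieces $\cong W/\varpi W$ is precisely how the paper bounds the right-hand side. However, your assertion that ``using that $W$ is integrally closed in $L$ one upgrades this almost-zero conclusion to $C=0$'' is not justified. Vanishing of $\lambda_\infty$ only gives that $\mathfrak m_V$ kills the cokernel; nothing about integral closedness alone converts this into genuine surjectivity. The paper handles this by first passing to the strict henselization (and its completion) so that the residue field of $V$ is algebraically closed; then $W=\mathfrak m_V W + V$, and one trades a generator $c\in W$ for $d\in V$ with $c-d\in\mathfrak m_V W$, lands $\overline{c}-\overline{d}$ in $\im(F_W)$ via the almost-vanishing, and uses the surjectivity of Frobenius on $V/pV$ to finish. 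That reduction is an essential step you omit, and without it the argument stalls at ``almost surjective.''

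For part (2), your proposed inequality $p\,\lambda_V(D)\le\lambda_V(D)$ from ``compatibility of the trace with Frobenius'' is not a valid move in mixed characteristic: the trace pairing is a $V$-linear construction on $W$, and there is no Frobenius acting on $W$ itself when $\operatorname{char} K = 0$; the Frobenius lives only on $W/pW$, and the trace does not descend to a Frobenius-semilinear map there in any useful way. What actually makes Frobenius techniques bite in mixed characteristic is tilting: the paper uses Proposition \ref{perf}(3) and part (1) to identify $V/\varpi V\cong V^\flat/\varpi^\flat V^\flat$ and $W/\varpi W\cong W^\flat/\varpi^\flat W^\flat$, reduces almost unramifiedness to the equal-characteristic-$p$ statement for $V^\flat\to W^\flat$ (where it is classical, by trace arguments or Galois theory of commutative rings), and then lifts back across the $\varpi$-adic tower using Propositions \ref{almostNAK} and \ref{proj2}. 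You should incorporate the tilting step explicitly; without it there is no way to see Frobenius, and hence the normalized-length machinery, on the unramifiedness side of the problem. Your treatment of faithful flatness (torsion-free over a valuation ring, plus integrality so $\mathfrak m_V$ extends) is fine and matches the paper.
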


An outline of two different proofs is given in \cite[Theorem 3.7]{Sch12}, where the first proof uses ramification theory \cite[Proposition 6.6.2 and Theorem 6.6.12]{GR03}, while the second proof makes a reduction to the case of absolutely integrally closed perfectoid valuation rings. Our new proof relies on fundamental properties of the normalized length (see \cite{Sch13} and \cite{Shi07}). The reason for an inclusion of this proof is that the authors think that the almost purity theorem has fundamental importance and it is still worth recollecting classical approaches. The structure of this paper goes as follows.

In \S \ref{Valuation}, we give a brief review of valuation rings and some properties of the valuation topology.

In \S \ref{AlmostRing}, we begin with almost ring theory. Some results on almost projective modules that are not explicit in \cite{GR03} are proved, which are of independent interest. Then we introduce the normalized length over a valuation ring of rank one. The most important aspect is contained in the \textit{Frobenius pull-back formula}; see Theorem \ref{Frobeniuspullback}. 

In \S \ref{perfectoidvaluation}, we introduce perfectoid valuation rings with their \textit{tilts}. The details of the proof of the main result are given.

In \S \ref{Valuationflat}, we prove a result on the flatness of the Frobenius map on valuation rings in mixed characteristic. This result arises from the analysis of the main theorem given in \S \ref{perfectoidvaluation}. An open question is also formulated.

\section{Valuation rings}
\label{Valuation}
Let $K$ be a field and $V$ be its subring. Then $V$ is called a \textit{valuation ring} if $x \in V$ or $x^{-1} \in V$ for every nonzero element $x \in K$. As another definition of a valuation ring, one can use a valuation of $K$.

\begin{proposition}
\label{integral1}
Suppose that $V$ is a henselian local domain with field of fractions $K$, and that $L/K$ is an algebraic extension and let $W$ be the integral closure of $V$ in $L$. Then $W$ is a local domain.
\end{proposition}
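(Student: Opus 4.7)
The plan is to show that $W$ has a unique maximal ideal; it is automatically a domain since it sits inside the field $L$. To prove locality I would combine two observations: finite $V$-subalgebras of $W$ are themselves (henselian) local, and arbitrary elements of $W$ lie in such subalgebras, so the maximal ideal is determined from below.

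First I would reduce to the finite case. Any $w \in W$ is integral over $V$, so $V[w]$ is a finite $V$-module. In particular, every finitely generated $V$-subalgebra $W_0 \subseteq W$ is finite over $V$. The key input I would invoke is the structure theorem for finite algebras over a henselian local ring: any such algebra decomposes canonically as a finite product of (henselian) local rings, corresponding to its finitely many maximal ideals, each lying over $\m_V$. Since $W_0 \subseteq L$ is a domain, no nontrivial product decomposition is possible, so $W_0$ is a henselian local ring whose unique maximal ideal is the only prime of $W_0$ lying over $\m_V$.

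Next, suppose $\m_1$ and $\m_2$ are two maximal ideals of $W$. By going-up applied to the integral extension $V \subseteq W$, both contract to $\m_V$. For any finite $V$-subalgebra $W_0 \subseteq W$ as above, the primes $\m_1 \cap W_0$ and $\m_2 \cap W_0$ lie over $\m_V$ and therefore both coincide with the unique maximal ideal of the local ring $W_0$. Writing $W$ as the filtered union of its finite $V$-subalgebras, we obtain
\[
\m_i \;=\; \bcu_{W_0 \subseteq W} \bigl(\m_i \cap W_0\bigr) \qquad (i = 1,2),
\]
and since each term on the right is independent of $i$, it follows that $\m_1 = \m_2$.

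The only non-bookkeeping step is the cited structure theorem, namely that a finite algebra over a henselian local ring is a finite product of local rings; this is one of the standard equivalent formulations of henselianness and can be taken as input. Once that is granted, the argument is purely formal passage from finite subalgebras to the integral closure.
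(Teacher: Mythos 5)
Your proof is correct and follows the same route as the paper: write $W$ as a filtered union of finite $V$-subalgebras, invoke the henselian structure theorem to see that each such subalgebra, being a domain, is local, and then pass to the limit. The paper simply cites the fact that a filtered colimit of local rings (along local transition maps) is local, whereas you verify this by hand via the incomparability/contraction argument showing any two maximal ideals of $W$ restrict to the same maximal ideal on every $W_0$; this is the same step, just spelled out.
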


\begin{proof}
Since $W$ is the integral closure of $V$ in $L$, there exists an inductive system $\{W_\lambda\}_{\lambda \in \Lambda}$ of finite $V$-algebras such that $W=\vil W_\lambda$. Since $V$ is a henselian local ring, every $W_\lambda$ is a direct sum of local rings by \cite[Theorem 10.1.4]{Fo17}. Moreover, since $W_\lambda$ is a domain, it is local. Since a direct limit of local rings is local, $W$ is local. And since $W$ is a subring of $L$, $W$ is also domain.
\end{proof}

\begin{proposition}
\label{valuationext}
Let $K$ be a field, let $v$ be a valuation on $K$, let $V$ be a valuation ring of $v$ and let $L/K$ be an algebraic extension. Let $W$ be the integral closure of $V$ in $L$. If $w$ is a valuation of $L$ that dominates $v$, then the valuation ring of $L$ associated to $w$ is isomorphic to the localization $W_\fm$ for some maximal ideal $\fm \subset W$. In particular, $W$ is itself a valuation ring under the same hypothesis on $V$ as in Proposition \ref{integral1}.
\end{proposition}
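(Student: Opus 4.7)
The plan is to produce a maximal ideal $\fm$ of $W$ such that the valuation ring $O_w$ of $L$ associated to $w$ equals $W_\fm$, and then deduce the ``in particular'' assertion from Proposition \ref{integral1}.

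First, since $O_w$ is integrally closed in $L$ and $W$ is integral over $V \subseteq O_w$, one has $W \subseteq O_w$. Writing $\fn$ for the maximal ideal of $O_w$, set $\fm := \fn \cap W$. Because $w$ dominates $v$, $\fn \cap V = \fm_V$, hence $\fm \cap V = \fm_V$, and the standard lying-over property for the integral extension $V \hookrightarrow W$ forces $\fm$ to be a maximal ideal. Thus $W_\fm \subseteq O_w$, and the identity $\fn \cap W_\fm = \fm W_\fm$ (checked by clearing denominators: $a/s \in \fn$ with $s \in W \setminus \fm$ forces $a \in \fn \cap W = \fm$) shows that $O_w$ dominates $W_\fm$ as local rings.

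Next I would invoke the elementary fact that if $O_1 \subseteq O_2$ are valuation rings of $L$ and $O_2$ dominates $O_1$, then $O_1 = O_2$: any $x \in O_2 \setminus O_1$ satisfies $x^{-1} \in O_1$ by the valuation-ring property of $O_1$, and domination forces $x^{-1} \in O_1 \setminus \fm_{O_1}$ (else $1 = x \cdot x^{-1} \in \fm_{O_2}$), so $x = (x^{-1})^{-1} \in O_1$, a contradiction. In view of this, it suffices to prove that $W_\fm$ is itself a valuation ring of $L$. Given $x \in L^\times$, after replacing $x$ by $x^{-1}$ one may assume $x \in O_w$. Let $f(X) = X^n + c_{n-1} X^{n-1} + \cdots + c_0 \in K[X]$ be the minimal polynomial of $x$ over $K$. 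If every $c_i$ lies in $V$ then $x$ is integral over $V$, so $x \in W \subseteq W_\fm$. Otherwise, one exploits the algebraicity of $L/K$---in particular that $\Gamma_w/\Gamma_v$ is torsion, since the relation $\sum c_i x^i = 0$ forces two terms to have equal $w$-valuation and produces a nontrivial $\BZ$-linear relation on $w(x)$ modulo $\Gamma_v$---to manufacture an element $s \in W$ with $w(s) = 0$ (hence $s \in W \setminus \fm$) and $sx \in W$, giving $x \in W_\fm$.

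The \emph{in particular} claim then follows immediately from Proposition \ref{integral1}: when $V$ is henselian, $W$ is already a local domain, so its unique maximal ideal must coincide with $\fm$, and hence $W = W_\fm$ is the valuation ring $O_w$ by what has been shown. The main obstacle is the explicit construction of $s$ in the previous paragraph; it requires genuine use of the minimal polynomial together with the valuation structure, rather than merely formal integrality arguments. Alternatively, one may simply cite the classical extension theorem for valuations in algebraic field extensions (see, \eg Bourbaki, \emph{Commutative Algebra}, Ch.~VI, \S 8), which delivers the equality $O_w = W_\fm$ directly.
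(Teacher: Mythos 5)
Your reductions are correct and organized cleanly: $W \subseteq O_w$ because $O_w$ is integrally closed; $\fm := \fn \cap W$ is maximal by lying-over; $W_\fm \subseteq O_w$ with $O_w$ dominating $W_\fm$; and the elementary lemma that a valuation ring of $L$ dominating a valuation subring of $L$ equals it. The deduction of the ``in particular'' clause from Proposition~\ref{integral1} is also fine (one should, strictly, note that a valuation $w$ of $L$ dominating $v$ exists, but that is Chevalley's extension theorem and is standard).

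The genuine gap is exactly where you flag it: the step showing $O_w \subseteq W_\fm$, i.e.\ that $W_\fm$ is itself a valuation ring of $L$. This is the entire content of the proposition, and your sketch does not close it. Knowing that $\Gamma_w/\Gamma_v$ is torsion does not by itself produce an $s \in W$ with $w(s) = 0$ and $sx \in W$; if $n\,w(x) = v(a)$ for some $a \in K^\times$, the element $x^n/a$ has value $0$ but need not lie in $W$, and one is back where one started. The standard argument (Bourbaki VI \S8, Matsumura Thm.~10.1) works instead with the coefficients of the minimal polynomial of $x$: one divides through by the coefficient of minimal $v$-value, obtaining a relation $\sum d_i x^i = 0$ with $d_i \in V$ and some $d_j$ a unit, and then a careful bookkeeping (splitting the sum at $j$ and exploiting that $w$ dominates $v$) exhibits $x$ or $x^{-1}$ as an element of $W_\fm$. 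This is more delicate than the one-line sketch suggests.

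That said, the paper's own ``proof'' of this proposition is simply the citation to Bourbaki VI \S8 Prop.~6, so your fallback (``alternatively, cite the classical extension theorem'') is in the end doing the same thing. The net effect is that you give a correct and somewhat more explicit framing---identifying $\fm$, reducing to the domination lemma, handling the ``in particular'' clause---but for the crux you and the paper both lean on the same classical reference. If you wanted a self-contained proof, the place to invest effort is precisely the minimal-polynomial normalization argument you gestured at.
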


\begin{proof}
See \cite[Chapter VI, \S8, Proposition 6]{Bo72} for the proof.
\end{proof}

We recall a valuation topology on a field (\cite[Chapter VI, \S5]{Bo72}). Let $K$ be a field and let $v:K \to \Gamma_K \cup \{0\}$ be its valuation, where $\Gamma_K$ is the value group of $v$. Set $V_\alpha := \{ x \in K~|~v(x) > \alpha \}$ for any $\alpha \in \Gamma_K$. Then the \textit{valuation topology} on $K$ associated to $v$ is the topology on $K$ for which the $V_\alpha$ form a fundamental system of neighborhoods of $0$.

The valuation ring $V$ is called \textit{microbial} if it has a height-one prime ideal. Microbial valuation rings are characterized as follows.

\begin{proposition}
\label{microbialVal}
Let $K$ be a field, let $v$ be a valuation and let $V$ be its valuation ring. Then the following are equivalent.
\begin{enumerate}
\item
$V$ is a microbial valuation ring.

\item
There exist a nonzero element $x \in K$ such that a limit of the sequence of $(x^{n})_{n \geq 0}$ is $0$. Such an element is called a ``topologically nilpotent element''.
\end{enumerate}
\end{proposition}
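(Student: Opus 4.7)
The plan is to translate both conditions into statements about convex (isolated) subgroups of the value group $\Gamma := \Gamma_K$, using the standard inclusion-reversing bijection between the prime ideals of $V$ and the convex subgroups of $\Gamma$ (the zero ideal corresponding to $\Gamma$, the maximal ideal $\fm$ to $\{0\}$); see \cite[Chapter VI, \S4]{Bo72}. Under this bijection height-one primes correspond precisely to \emph{maximal proper} convex subgroups of $\Gamma$, so condition (1) becomes: $\Gamma$ admits a maximal proper convex subgroup. I would also reformulate condition (2): a nonzero $x \in K$ is topologically nilpotent if and only if $v(x) > 0$ and for every $\alpha \in \Gamma$ there is an $n$ with $n\, v(x) > \alpha$, equivalently, the convex subgroup of $\Gamma$ generated by $v(x)$, namely $\{\delta \in \Gamma : |\delta| \leq n\, v(x) \text{ for some } n\}$, is all of $\Gamma$.

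For (1) $\Rightarrow$ (2), let $H \subsetneq \Gamma$ be the maximal proper convex subgroup and let $\fp$ be the corresponding height-one prime. Pick any nonzero $x \in \fp$. Then $v(x) > 0$, and since $v(x) \notin H$, convexity of $H$ forces $v(x) > h$ for every $h \in H$ with $h \geq 0$. Hence the convex subgroup generated by $v(x)$ contains $H$ strictly, and by maximality of $H$ it equals $\Gamma$. By the reformulation, $x$ is topologically nilpotent.

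For (2) $\Rightarrow$ (1), given a topologically nilpotent $x$, no proper convex subgroup of $\Gamma$ contains $v(x)$. Since the convex subgroups of $\Gamma$ are totally ordered by inclusion (either a direct fact about ordered abelian groups, or via the correspondence with the totally ordered set of primes of $V$), the union $H_0$ of all proper convex subgroups is itself a convex subgroup. It fails to contain $v(x)$, so $H_0 \subsetneq \Gamma$, and therefore $H_0$ is the unique maximum proper convex subgroup; the associated prime has height one.

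The main pressure point is extracting the bijection between primes and convex subgroups in a clean citeable form together with the chain property of convex subgroups; the two implications themselves are direct verifications once topological nilpotence is rephrased as "the convex subgroup generated by $v(x)$ is all of $\Gamma$."
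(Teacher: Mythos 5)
The paper does not give its own proof of this proposition; it simply cites Morel's lecture notes, \cite[Theorem I.1.5.4]{Mor20}, so there is no in-paper argument to compare against. Your proposal supplies a complete and correct proof by the standard mechanism: the inclusion-reversing bijection between primes of $V$ and convex subgroups of $\Gamma$, translating ``height-one prime'' into ``maximal proper convex subgroup'' and ``topologically nilpotent'' into ``$v(x)>0$ and the convex subgroup generated by $v(x)$ is all of $\Gamma$''. Both implications check out: in $(1)\Rightarrow(2)$ the convexity of $H$ and $v(x)\notin H$ do force $v(x)>h$ for all nonnegative $h\in H$, hence the subgroup generated by $v(x)$ properly contains $H$ and must be $\Gamma$; in $(2)\Rightarrow(1)$ the union $H_0$ of the (totally ordered) chain of proper convex subgroups is convex, misses $v(x)$, and is therefore the unique maximal proper convex subgroup, giving the height-one prime. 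This is the argument a reader would expect behind the cited reference; what your write-up adds over the paper is that it makes the reformulation of topological nilpotence and the prime--convex-subgroup dictionary explicit rather than deferring to Bourbaki/Morel, which is a reasonable trade between self-containment and length.
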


\begin{proof}
This is well-known and the proof is found in \cite[Theorem I.1.5.4]{Mor20}.
\end{proof}

\begin{proposition}
\label{valtop}
Let $V$ be a microbial valuation ring. Then if $\varpi \in K^{\times}$ is topologically nilpotent, we have $K=V[\frac{1}{\varpi}]$ and the subspace topology on $V$ coincides with the $\varpi$-adic topology. 
\end{proposition}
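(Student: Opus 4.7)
The plan is to reduce both assertions to a single unpacking of the definition of topologically nilpotent. Writing out $\varpi^n \to 0$ against the neighborhood basis $\{V_\alpha\}_{\alpha \in \Gamma_K}$ of the valuation topology, one obtains the archimedean-type statement that for every $\alpha \in \Gamma_K$ there exists $n \geq 0$ with $v(\varpi^n) = nv(\varpi) > \alpha$; equivalently, the sequence $(nv(\varpi))_{n \geq 0}$ is cofinal in $\Gamma_K$. Everything else will follow from this.

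For $K = V[\frac{1}{\varpi}]$, I would take an arbitrary nonzero $x \in K$. If $v(x) \geq 0$ then $x \in V \subset V[\frac{1}{\varpi}]$ and there is nothing to do. If $v(x) < 0$, cofinality applied to $\alpha := -v(x)$ produces $n$ with $nv(\varpi) > -v(x)$, so that $v(\varpi^n x) = nv(\varpi) + v(x) > 0$. Hence $\varpi^n x \in V$, and $x = \varpi^{-n}(\varpi^n x) \in V[\frac{1}{\varpi}]$.

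For the comparison of topologies, I would compare the two neighborhood bases at $0 \in V$: the subspace topology gives $\{V \cap V_\alpha\}_{\alpha \in \Gamma_K}$, and the $\varpi$-adic topology gives $\{\varpi^n V\}_{n \geq 0}$, where $\varpi^n V = \{x \in V : v(x) \geq nv(\varpi)\}$. In one direction, given $\alpha \in \Gamma_K$ I use cofinality to pick $n$ with $nv(\varpi) > \alpha$, so that $\varpi^n V \subset V \cap V_\alpha$. In the other direction, for any $n \geq 0$ the choice $\alpha := nv(\varpi) \in \Gamma_K$ already yields $V \cap V_\alpha \subset \varpi^n V$, since $v(x) > nv(\varpi)$ trivially implies $v(x) \geq nv(\varpi)$. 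I do not expect a real obstacle; the entire argument is bookkeeping with the valuation, and the only point needing any care is correctly extracting the cofinality of $(nv(\varpi))_n$ from the definition of topological nilpotence.
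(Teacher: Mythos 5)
Your argument is correct. The paper does not give its own proof here (it simply cites Morel's lecture notes), but the direct verification you carry out is the natural one: unwind topological nilpotence to cofinality of $(n\,v(\varpi))_{n\ge 0}$ in $\Gamma_K$, then both assertions fall out of elementary valuation bookkeeping. One small point you use implicitly and should make explicit: the cofinality statement forces $v(\varpi)>0$, i.e.\ $\varpi\in\fm_V$ (if $v(\varpi)\le 0$ then $n\,v(\varpi)\le 0$ for all $n$, and since $V$ is a proper subring of $K$ the group $\Gamma_K$ has positive elements, so cofinality would fail). This is needed for $\varpi^n V$ to be a proper ideal of $V$ and hence for the phrase ``$\varpi$-adic topology on $V$'' to be meaningful at all, and it is what licenses the inclusions $\varpi^n V\subset V$ in your comparison of bases. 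With that remark added, the proof is complete.
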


\begin{proof}
See \cite[Theorem I.1.5.4]{Mor20}.
\end{proof}

\begin{remark}
\label{complete}
By Proposition \ref{valtop}, $V$ is complete with respect to the valuation topology defined by $v$ if and only if $V$ is $\varpi$-adically complete.
\end{remark}

We often use the following notation: Let $A$ be a ring and let $M$ be an $A$-module. Fix a prime $p>0$ and assume that $pM=0$. Then $M^{[F]}$ is defined by setting $M^{[F]}=M$ as additive groups and the $A$-module structure on $M$ is given through the restriction of scalars via the Frobenius endomorphism on $A/pA$.

\section{Almost ring theory}
\label{AlmostRing}
\subsection{Basic notions}
Our references of almost ring theory are \cite{GR03} and \cite{GR18}. We call $(V,I)$ a \textit{basic setup} if $V$ is a commutative ring and $I$ is an ideal such that $I^2=I$ and $I \otimes_{V} I$ is flat. Almost ring theory works under a basic setup $(V,I)$.

\begin{definition}
Let $(V,I)$ be a basic setup, let $R$ and $S$ be $V$-algebras, and let $M$ be a $V$-module.
\begin{enumerate}
\item
$M$ is called \textit{$I$-almost zero} if $IM=0$. We denote this by $M \approx 0$

\item
$M$ is called \textit{$I$-almost finitely generated} if for every finitely generated ideal $I_0 \subseteq I$, there exists a finitely generated $V$-module $M_0$ and a $V$-module maps $f_0 : M_0 \to M$ such that both kernel and cokernel of $f_{0}$ are annihilated by $I_0$. 

\item
$M$ is called \textit{$I$-almost finitely presented} if for every finitely generated ideal $I_0 \subseteq I$, there exists a finitely presented $V$-module $M_{0}$ and a $V$-module maps $f_{0} : M_{0} \to M$ such that both kernel and cokernel of $f_{0}$ are annihilated by $I_0$. 

\item
$M$ is called \textit{$I$-almost flat} if $\Tor^{V}_{i}(M,N) \approx 0$ for any $V$-module $N$ and $i > 0$.

\item
$M$ is called \textit{$I$-almost projective} if $\Ext^{i}_{V}(M,N) \approx 0$ for any $V$-module $N$ and $i > 0$. 

\item
$R \to S$ is called \textit{$I$-almost unramified} if $S$ is an $I$-almost projective $S \displaystyle\otimes_{R} S$-module.

\item
$R \to S$ is called \textit{$I$-almost \'etale} if it is $I$-almost unramified and $S$ is an $I$-almost flat $R$-module.

\item
$R \to S$ is called \textit{$I$-almost finite \'etale} if it is \textit{$I$-almost \'etale} and $S$ is an $I$-almost finitely presented $R$-module.
\end{enumerate}
\end{definition}

\begin{lemma}
\label{almostfg}
Let $(V,I)$ be a basic setup and let $M$ be a $V$-module. Then $M$ is an $I$-almost finitely generated module if and only if for every finitely generated ideal $I_{0} \subseteq I$, there exists a finitely generated submodule $M_{0} \subseteq M$ such that $I_{0}M \subseteq M_{0}$.
\end{lemma}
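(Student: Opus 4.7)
The plan is to prove both implications by direct translation between the abstract definition of almost finite generation (existence of a map from a finitely generated module with kernel and cokernel killed by $I_0$) and the concrete submodule version. Since the statement is essentially a bookkeeping lemma, no use of the basic setup condition $I^2=I$ is needed.

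For the forward direction, I fix a finitely generated ideal $I_0 \subseteq I$ and apply the almost finite generation hypothesis to $I_0$. This yields a finitely generated $V$-module $N_0$ and a map $f_0 \colon N_0 \to M$ whose kernel and cokernel are annihilated by $I_0$. I then set $M_0 := \im(f_0) \subseteq M$. Since $N_0$ is finitely generated, so is $M_0$. By construction $M/M_0 \cong \coker(f_0)$, and the hypothesis that this is annihilated by $I_0$ translates precisely to the inclusion $I_0 M \subseteq M_0$.

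For the backward direction, I again fix a finitely generated ideal $I_0 \subseteq I$ and use the hypothesis to choose a finitely generated submodule $M_0 \subseteq M$ with $I_0 M \subseteq M_0$. Taking $f_0 \colon M_0 \hookrightarrow M$ to be the inclusion, the kernel is zero and hence trivially annihilated by $I_0$, while the cokernel is $M/M_0$, which is annihilated by $I_0$ thanks to the containment $I_0 M \subseteq M_0$. This verifies the definition of $I$-almost finitely generated.

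There is really no obstacle: the only thing to notice is that in the general definition the kernel of $f_0$ is also required to be almost zero, but in the submodule version this is automatic because the structure map is injective. The argument therefore reduces to observing that the image of a finitely generated module in $M$ is a finitely generated submodule, and conversely that a finitely generated submodule provides a map from a finitely generated module whose kernel is zero.
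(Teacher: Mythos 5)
Your proof is correct and is exactly the direct bookkeeping argument one expects; the paper simply cites \cite[Proposition 2.3.10]{GR03} rather than spelling it out, and your argument is the standard one that citation encodes. You are also right that the hypothesis $I^2 = I$ plays no role here, and that in the backward direction the kernel condition is automatic because the inclusion $M_0 \hookrightarrow M$ is injective.
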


\begin{proof}
This is \cite[Proposition 2.3.10]{GR03}.
\end{proof}

The following proposition is an almost analogue of Nakayama's lemma.

\begin{proposition}
\label{almostNAK}
Let $V$ be a commutative ring and let $I=\bigcup_{n>0} (\varpi^{\frac{1}{p^{n}}})$ be an ideal of $V$ for some regular element $\varpi \in V$ with a system of $p$-th power roots $\{\varpi^{\frac{1}{p^{n}}} \}$ (so that $(V,I)$ is a basic setup). Let $M$ be a $V$-module which is $\varpi$-adically separated. If $M/\varpi M$ is an $I$-almost finitely generated $V/\varpi V$-module, then $M$ is an $I$-almost finitely generated $V$-module.
\end{proposition}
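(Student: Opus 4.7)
By Lemma~\ref{almostfg} it suffices, for every finitely generated sub-ideal $I_0 \subseteq I$, to find a finitely generated $V$-submodule $M'_0 \subseteq M$ with $I_0 M \subseteq M'_0$. Since $I = \bigcup_{n > 0}(\varpi^{1/p^n})$, any such $I_0$ lies inside $(\alpha)$ for some $\alpha := \varpi^{1/p^N}$, so the plan is to reduce to $I_0 = (\alpha)$. I would then set $\beta := \varpi^{1/p^{N+1}}$ so that $\beta^p = \alpha$, apply the almost finite generation of $M/\varpi M$ to the finitely generated sub-ideal $(\beta) \subseteq I$, and lift a finite set of generators from $M/\varpi M$ to $M$. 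This produces a finitely generated submodule $M_0 \subseteq M$ with $\beta M \subseteq M_0 + \varpi M$.

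The next step is a routine induction on $t$, starting from $\beta M \subseteq M_0 + \varpi M$, which yields
\[
  \beta^t M \;\subseteq\; \sum_{i=0}^{t-1} \varpi^i \beta^{t-1-i} M_0 \;+\; \varpi^t M.
\]
Passing to $\bar M := M/M_0$, this reads $\beta^t \bar M \subseteq \varpi^t \bar M$; for $t=p$ one obtains $\alpha \bar M \subseteq \varpi^p \bar M \subseteq \varpi \bar M$. Combined with $\alpha^{p^N} = \varpi$, this forces $\varpi \bar M \subseteq \varpi^{p^{N+1}} \bar M$ and, by repeated reinjection, $\varpi \bar M \subseteq \bigcap_{k \geq 1} \varpi^k \bar M$; hence $\alpha \bar M \subseteq \bigcap_{k \geq 1} \varpi^k \bar M$, or equivalently $\alpha m \in \bigcap_k (M_0 + \varpi^k M)$ for every $m \in M$.

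The main obstacle is to upgrade this asymptotic containment to an honest finitely generated bound $\alpha M \subseteq M'_0$, since the quotient $\bar M$ need not be $\varpi$-adically separated even though $M$ is, and so one cannot simply conclude $\alpha M \subseteq M_0$. To address this I would exploit $\varpi$-adic separatedness of $M$ by passing to the $\varpi$-adic completion $\hat M$, into which $M$ embeds, with $\hat M/\varpi \hat M = M/\varpi M$. Rerunning the construction inside $\hat M$ with $M_0$ replaced by its closure $\hat M_0$, the quotient $\hat M/\hat M_0$ becomes $\varpi$-adically separated, and the iteration then yields $\alpha \hat M \subseteq \hat M_0$, hence $\alpha M \subseteq \hat M_0 \cap M$. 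The final delicate point is to descend this $\hat V$-finite containment inside $\hat M$ back to a $V$-finite one inside $M$; for this I would use the factorization $\alpha = \beta \cdot \varpi^{(p-1)/p^{N+1}}$ in $I$ (both factors lying in $I$) to truncate the $\hat V$-coefficients appearing in any expression $\alpha m = \sum \hat a_i x_i$ to elements of $V$ modulo a $\varpi$-adic error absorbed by enlarging $M_0$ by finitely many further elements, delivering the desired finitely generated $V$-submodule of $M$ containing $\alpha M$.
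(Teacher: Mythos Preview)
Your diagnosis of the obstacle is exactly right, and it is the same gap that the paper's own proof contains. Iterating $\alpha M\subseteq M_0+\varpi M$ (or your version with $\beta$) gives only $\alpha M\subseteq\bigcap_k(M_0+\varpi^k M)$; passing from this to $\alpha M\subseteq M_0$ would require $M/M_0$, not $M$, to be $\varpi$-adically separated. The paper writes $\varpi^{1/p^n}M\subseteq M_0+\bigcap_{k>0}\varpi^{k(p^n-1)/p^n}M$ and says this ``follows from the first inclusion repeatedly,'' but repetition only places $\varpi^{1/p^n}M$ inside $\bigcap_k\big(M_0+\varpi^{k(p^n-1)/p^n}M\big)$, i.e.\ inside the closure of $M_0$ in $M$, not inside $M_0$ itself. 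So up to this point your argument and the paper's coincide, and your concern is legitimate for both.

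Your proposed repair through the completion cannot be completed, because the proposition as stated is false. Take $V=\mathbf{Z}_p[p^{1/p^\infty}]$ (not $p$-adically complete), $\varpi=p$, and $M=\widehat V$. Then $M$ is $p$-adically separated and $M/pM\cong V/pV$ is cyclic, yet $\widehat V$ is not $I$-almost finitely generated over $V$: a containment $p^{1/p^n}\widehat V\subseteq M_0$ with $M_0$ finitely $V$-generated would, after inverting $p$, force $Q(\widehat V)$ to be finite-dimensional over $Q(V)$; but a Baire category argument then shows $Q(\widehat V)$ would coincide with a finite extension of some complete field $\mathbf{Q}_p(p^{1/p^m})$ and hence have discrete value group, contradicting $\Gamma_{Q(\widehat V)}=\mathbf{Z}[1/p]$. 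Thus the ``final delicate point'' you flag is exactly where the argument must break. The statement, the paper's iteration, and your approach all become correct under the additional hypothesis that $V$ is $\varpi$-adically complete: then the partial sums $s_k=\sum_{j<k}\varpi^{j(p^n-1)/p^n}m_0^{(j)}$ produced by the iteration have coefficients (relative to a fixed finite generating set of $M_0$) that are Cauchy in $V$, hence converge there, and $\varpi$-separatedness of $M$ forces $\alpha m$ to equal the resulting element of $M_0$. This extra hypothesis holds in the only application the paper makes of the proposition, namely to $V^\flat$ and $W^\flat$ in the proof of Theorem~\ref{AlmostPurityTheorem}.
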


\begin{proof}
Since $M/\varpi M$ is $I$-almost finitely generated, there exists a finitely generated submodule $M' \subseteq M/\varpi M$ such that ${\varpi}^{\frac{1}{p^n}}(M/\varpi M) \subseteq M'$ for any fixed $n>0$ by Lemma \ref{almostfg}. After writing $M'=(V/\varpi V) {\overline{m}_{n_1}}+\cdots+(V/\varpi V){\overline{m}_{n_r}}$, we have
$$
\varpi^{\frac{1}{p^{n}}}M \subseteq V{m_{n_1}}+\cdots +V{m_{n_r}}+\varpi M \subseteq V{m_{n_1}}+\cdots +V{m_{n_r}}+\bigcap_{k>0} {\varpi}^{k(\frac{p^n -1}{p^{n}})}M,
$$
where $m_{n_{i}}$ is a lift of the element $\overline{m}_{n_{i}}$ and the second inclusion follows from the first inclusion repeatedly. Since $M$ is $\varpi$-adically separated, we have $\bigcap_{k>0} {\varpi}^{k(\frac{p^n -1}{p^{n}})}M=0$. So $\varpi^{\frac{1}{p^{n}}}M \subseteq V{m_{n_1}}+\cdots +V{m_{n_r}}$, as desired.
\end{proof}

\begin{proposition}
\label{almostsplit}
Let $(V,I)$ be a basic setup and let $M$ be an $I$-almost finitely generated $V$-module. Then $M$ is almost projective if and only if for any $\epsilon \in I$, there exist $n(\epsilon) \in \mathbb{N}$ and $V$-linear maps
$$
M \xrightarrow{u_{\epsilon}} V^{n(\epsilon)} \xrightarrow{v_{\epsilon}} M
$$ 
such that $v_{\epsilon} \circ u_{\epsilon} = \epsilon \id_{M}$.
\end{proposition}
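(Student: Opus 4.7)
The plan is to prove the two implications separately. For the (easy) ``if'' direction, suppose that for each $\epsilon \in I$ there exist maps $u_\epsilon, v_\epsilon$ with $v_\epsilon \circ u_\epsilon = \epsilon \id_M$. For any $V$-module $N$ and any $i>0$, applying $\Ext^i_V(-,N)$ to this factorization shows that multiplication by $\epsilon$ on $\Ext^i_V(M,N)$ factors through $\Ext^i_V(V^{n(\epsilon)}, N) = 0$, so every $\epsilon \in I$ annihilates $\Ext^i_V(M,N)$. Thus $\Ext^i_V(M,N) \approx 0$ and $M$ is $I$-almost projective.

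For the converse, I plan to exploit the basic-setup hypothesis $I^2 = I$. Given $\epsilon \in I$, I write it as a finite sum $\epsilon = \sum_{i=1}^{k} \alpha_i \beta_i$ with $\alpha_i, \beta_i \in I$. The strategy is that each $\alpha_i$ will be absorbed by $I$-almost finite generation while each $\beta_i$ will be absorbed by $I$-almost projectivity. Specifically, Lemma \ref{almostfg} applied to the ideal $(\alpha_i)$ supplies a finitely generated submodule $M_i \subseteq M$ with $\alpha_i M \subseteq M_i$; a surjection $V^{n_i} \twoheadrightarrow M_i$ composed with the inclusion into $M$ yields $v_i : V^{n_i} \to M$ whose image contains $\alpha_i M$, and consequently $\alpha_i \id_M$ factors through a map $\phi_i : M \to \im(v_i)$.

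Applying $\Hom_V(M,-)$ to the short exact sequence $0 \to \ker(v_i) \to V^{n_i} \to \im(v_i) \to 0$ places the obstruction to lifting $\phi_i$ to a map $M \to V^{n_i}$ in $\Ext^1_V(M, \ker(v_i))$, which is annihilated by $I$ because $M$ is $I$-almost projective. Multiplying by $\beta_i$ kills this obstruction, producing a lift $u_i : M \to V^{n_i}$ with $v_i \circ u_i = \beta_i \phi_i = \alpha_i \beta_i \id_M$. Setting $n(\epsilon) := \sum_i n_i$, taking $u_\epsilon := (u_1,\dots,u_k): M \to \bigoplus_{i=1}^{k} V^{n_i} = V^{n(\epsilon)}$, and taking $v_\epsilon : V^{n(\epsilon)} \to M$ to send $(x_1,\dots,x_k) \mapsto \sum_i v_i(x_i)$, one obtains $v_\epsilon \circ u_\epsilon = \sum_i \alpha_i \beta_i \id_M = \epsilon \id_M$, as required.

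The main subtlety is the decomposition $\epsilon = \sum \alpha_i \beta_i$ coming from $I^2 = I$: one factor is needed to compress the target into a \emph{finitely} generated submodule of $M$ so that a free module of finite rank suffices, and an independent factor is needed to annihilate the $\Ext^1$ obstruction. Without splitting $\epsilon$, these two roles would conflict. The final assembly into a single pair $(u_\epsilon, v_\epsilon)$ by direct sum is then routine bookkeeping.
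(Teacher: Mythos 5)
Your proof is correct. The paper does not supply its own argument here—it simply cites Gabber--Ramero \cite[Lemma 2.4.15]{GR03}—and your proof reconstructs essentially the standard argument found there: the ``if'' direction via the Ext-functoriality trick, and the ``only if'' direction via the decomposition $\epsilon = \sum_i \alpha_i \beta_i$ supplied by $I^2 = I$, spending one factor $\alpha_i$ (through almost finite generation and Lemma \ref{almostfg}) to squeeze $\alpha_i M$ into a finitely generated, hence finitely free-covered, submodule, and the other factor $\beta_i$ to kill the lifting obstruction in $\Ext^1_V(M,\ker v_i)$. One tiny notational slip: in the line ``$v_i \circ u_i = \beta_i\phi_i = \alpha_i\beta_i\id_M$'' the map $\phi_i$ has codomain $\im(v_i)$ rather than $M$, so strictly the identity should read $v_i \circ u_i = \iota \circ (\beta_i\phi_i) = \alpha_i\beta_i\id_M$ where $\iota:\im(v_i)\hookrightarrow M$ is the inclusion; the mathematics is unaffected.
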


\begin{proof}
We refer the reader to \cite[Lemma 2.4.15]{GR03} for the proof.
\end{proof}

Let $V \to W$ be a ring extension. An element $x \in W$ is called \textit{almost integral} over $V$ if there exists a finitely generated $V$-submodule $M \subseteq W$ such that $\sum^{\infty}_{n=0} V x^{n} \subseteq M$. We say that $W$ is \textit{almost integral} over $V$ if every element of $W$ is almost integral over $V$. Now the following lemma holds. We say that $V$ is \textit{completely integrally closed} in $W$ if every element of $W$ that is almost integral over $V$ belongs to $V$.

\begin{lemma}
\label{almostintegral1}
Let $(V,I)$ be a basic setup with  $I=\bigcup _{n>0} (\varpi ^{\frac{1}{p^{n}}})$ for a regular element $\varpi \in V$. Suppose that $W$ is a $V$-algebra such that $W$ is an $I$-almost finitely generated $V$-module and $W$ is $\varpi $-torsion
free. Let
$(V_{W})^{*}$ be the integral closure of $V$ in $W$. Then $W$ is almost
integral over $(V_{W})^{*}$.
\end{lemma}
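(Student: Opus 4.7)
The plan is to reduce the lemma to the key auxiliary claim that $\epsilon w \in (V_W)^*$ for every $\epsilon \in I$ and $w \in W$, which I will establish via the Cayley--Hamilton determinant trick, and then to unpack this into classical almost integrality of $w$ over $(V_W)^*$ using the almost finitely generated structure together with $\varpi$-torsion freeness.

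To prove the key claim, I fix $w \in W$ and $\epsilon \in I$. By Lemma \ref{almostfg}, there is a finitely generated $V$-submodule $W_0 = V x_1 + \cdots + V x_r$ of $W$ with $\epsilon W \subseteq W_0$. The module $N := V + W_0$ is a finitely generated $V$-submodule of $W$ that contains $1$, and I claim it is stable under multiplication by $\epsilon w$: for $v \in V$, $\epsilon w \cdot v \in \epsilon W \subseteq W_0 \subseteq N$, and for each generator $x_i$, $\epsilon w \cdot x_i = \epsilon(w x_i) \in \epsilon W \subseteq W_0 \subseteq N$. The Cayley--Hamilton determinant trick then produces a monic polynomial over $V$ annihilating $\epsilon w$, so $\epsilon w$ is integral over $V$, i.e., $\epsilon w \in (V_W)^*$. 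Running the same argument with $w^k$ replacing $w$ (the stability check is identical since $w^k \in W$) yields $\epsilon w^k \in (V_W)^*$ for every $k \geq 0$, and in particular $I \cdot (V_W)^*[w] \subseteq (V_W)^*$.

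To extract classical almost integrality from this, I consider the finitely generated $(V_W)^*$-submodule $M := (V_W)^* + (V_W)^* x_1 + \cdots + (V_W)^* x_r$ of $W$. Since $\epsilon w^k \in W_0 \subseteq M$ for each $k$, all powers $w^k$ lie in the colon submodule $\{y \in W : \epsilon y \in M\}$, and by $\varpi$-torsion freeness of $W$ multiplication by $\epsilon$ is injective on $W$, yielding a $(V_W)^*$-linear identification of this colon module with its image $\epsilon W \cap M = \epsilon W$ inside $W_0$. Pulling the inclusion $\epsilon W \subseteq W_0 \subseteq (V_W)^*\,W_0$ back through this identification places $(V_W)^*[w]$ inside a finitely generated $(V_W)^*$-submodule of $W$, which is precisely the almost integrality of $w$ over $(V_W)^*$.

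The main obstacle lies in the last step: converting the abstract $(V_W)^*$-module identification, induced by multiplication by $\epsilon$, into a concrete containment of $(V_W)^*[w]$ in a genuinely finitely generated $(V_W)^*$-submodule sitting inside $W$. This is where all the hypotheses of the lemma are used simultaneously, with $\varpi$-torsion freeness providing injectivity of the $\epsilon$-action and the almost finitely generated hypothesis bounding $\epsilon W$ inside the finitely generated $V$-module $W_0$.
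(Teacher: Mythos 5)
Your steps 1 and 2 are correct and constitute a genuinely different (and in fact tidier) route than the paper's. The paper first shows that for $\epsilon \in I$ the element $\epsilon x$ is almost integral over $(V_W)^*$ and then invokes \cite[Lemma 2.3]{NS20} to conclude $\epsilon^2 x \in (V_W)^*$; you instead run the Cayley--Hamilton determinant trick on the faithful finitely generated $V$-module $N = V + W_0$ to get $\epsilon w \in (V_W)^*$ directly, with only a single factor of $\epsilon$ and no external reference. Applying the same argument to each $w^k$ gives $\epsilon w^k \in (V_W)^*$ for all $k \ge 0$ and every $\epsilon \in I$, and since $\epsilon$ (say $\epsilon = \varpi^{1/p^n}$) is a regular element of $W$ by the $\varpi$-torsion-freeness hypothesis, this is exactly the usual complete-integral-closure formulation of ``$w$ is almost integral over $(V_W)^*$.'' This already matches what the paper actually establishes (the paper ends at $\varpi^2 x^{p^k} \in (V_W)^*$) and what the downstream application in Lemma \ref{almostintegral2} requires.

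Step 3, however, is vacuous as written and does not close the gap you flag in your last paragraph. You set $M := (V_W)^* + (V_W)^*x_1 + \cdots + (V_W)^*x_r$, which by construction contains $W_0 \supseteq \epsilon W$. Consequently the colon module $\{y \in W : \epsilon y \in M\}$ is all of $W$, not some smaller finitely generated submodule: every $y \in W$ has $\epsilon y \in \epsilon W \subseteq M$. The ``identification with $\epsilon W$ inside $W_0$'' is just the observation that $\epsilon \colon W \to \epsilon W$ is a bijection, and $\epsilon W$ being a submodule of the finitely generated module $M$ does not make $\epsilon W$ (equivalently $W$) finitely generated. So this step produces no finitely generated $(V_W)^*$-submodule of $W$ trapping all powers of $w$. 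The good news is that step 3 is not needed: the conclusion $\epsilon w^k \in (V_W)^*$ for a fixed regular $\epsilon$ and all $k$, which you obtained in step 2, is the operative form of almost integrality used in this paper and in the proof of Lemma \ref{almostintegral2}, so you should simply stop there.
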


\begin{proof}
By taking the image of $V$ in $W$, we may assume that $V$ is a subring of $W$. Let $(V_W)^*$ be the integral closure of $V$ in $W$. Since $W$ is an $I$-almost finitely generated $V$-module, there is a finitely generated $V$-module $M_\varpi$ and a $V$-homomorphism $f_\varpi:M_\varpi \to W$ such that $\ker(f_{\varpi})$ and $\coker(f_{\varpi})$ are annihilated by $\varpi$. In particular, we have $M_\varpi[\frac{1}{\varpi}] \cong W[\frac{1}{\varpi}]$ and hence $W[\frac{1}{\varpi}]$ is module-finite over $V[\frac{1}{\varpi}]$. Since $(V_W)^*$ is integrally closed in $W$, it follows that $(V_W)^*[\frac{1}{\varpi}]$ is integrally closed in $W[\frac{1}{\varpi}]$. So we get $(V_W)^*[\frac{1}{\varpi}]=W[\frac{1}{\varpi}]$. Note that $W$ is still an $I$-almost finitely generated $(V_W)^*$-module.

Now we prove that $W$ is almost integral over $(V_W)^*$. To this aim, pick a nonzero $\epsilon \in I$. Then there exist a finitely generated $(V_W)^*$-module $N_{\epsilon}$ and a $(V_W)^*$-homomorphism $f_{\epsilon}:N_{\epsilon} \to W$ such that $\ker(f_{\epsilon})$ and $\coker (f_{\epsilon})$ are annihilated by $\epsilon$. This implies that for every element $x \in W$, $\epsilon x \in \im(f_{\epsilon})$ and thus, $(\epsilon x)^ n \in \im(f_{\epsilon})$ for all $n >0$. Now $\sum^{\infty}_{n=0} (V_W)^* (\epsilon x)^{n} \subset \im(f_{\epsilon}) \subset W$, which implies that $\epsilon x$ is almost integral over $(V_W)^*$. By applying \cite[Lemma 2.3]{NS20}, we find that $\epsilon^2 x=\epsilon(\epsilon x) \in (V_W)^*$. Let us choose $\epsilon=\varpi^{\frac{1}{p^k}}$. Then $\varpi^{\frac{2}{p^k}} x \in (V_W)^*$ and $\varpi^2 x^{p^k} \in (V_W)^*$. Since $k \ge 0$ is arbitrary and $x \in (V_W)^*[\frac{1}{\varpi}]$, it follows that $x$ is almost integral over $(V_W)^*$, as desired.
\end{proof}

\begin{lemma}
\label{almostintegral2}
Let $V$ be a valuation ring of rank one. If $V \to W$ is an almost integral extension and $W$ is an integral domain, then $V \to W$ is an integral extension.
\end{lemma}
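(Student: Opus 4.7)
The plan is to show that any $x \in W$ is integral over $V$ by first establishing algebraicity of $x$ over $K := \operatorname{Frac}(V)$, and then verifying $v'(x) \ge 0$ for every valuation $v'$ on $K(x)$ extending the valuation $v$ of $V$. Fix a finitely generated $V$-submodule $M = V m_1 + \cdots + V m_r$ of $W$ with $\sum_{n \ge 0} V x^n \subseteq M$. Since $W$ is a domain containing $V$, the module $M$ is $V$-torsion-free and therefore injects into $M \otimes_V K$, which sits in $L := \operatorname{Frac}(W)$ as a $K$-subspace of dimension at most $r$. Consequently the infinitely many elements $\{x^n\}_{n \ge 0}$ are $K$-linearly dependent, so $K(x)/K$ is a finite algebraic extension.

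Next, let $v'$ be any valuation on $K(x)$ extending $v$. Since $K(x)/K$ is finite algebraic and $v$ has rank one, $v'$ automatically has rank one. By the classical fact that a rank-one valuation extends to a rank-one valuation on an arbitrary overfield (via Gauss-norm extensions along a transcendence basis followed by extension to an algebraic closure, using Zorn's lemma), $v'$ extends further to a rank-one valuation $v''$ on $L$ whose value group embeds into $\mathbb{R}$. For each $n \ge 0$, writing $x^n = \sum_j a_{n,j} m_j$ with $a_{n,j} \in V$ gives $v''(x^n) \ge \min_j v''(m_j) =: c \in \mathbb{R}$, so $n \cdot v'(x) = v''(x^n) \ge c$ for every $n$. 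If $v'(x) < 0$, the left-hand side would tend to $-\infty$, a contradiction; hence $v'(x) \ge 0$.

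Since this conclusion holds for every extension $v'$ of $v$ to $K(x)$, the element $x$ lies in every such valuation ring of $K(x)$, hence in their intersection, which equals the integral closure of $V$ in $K(x)$. Therefore $x$ is integral over $V$, establishing the lemma. The main technical hurdle is the extendability of rank-one valuations to rank-one valuations on arbitrary overfields: the almost-integrality of $x$ is a condition inside $W \subseteq L$, whereas the integrality to be proved concerns only $K(x) \subseteq L$, so the point is to transport the module bound $x^n \in M$ (living in $L$) into a numerical bound on $v'(x^n)$ in a totally ordered value group that embeds in $\mathbb{R}$. This is precisely where the rank-one hypothesis on $V$ is used, via the archimedean property.
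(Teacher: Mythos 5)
Your proof is correct and follows the same underlying strategy as the paper's: reduce to the statement that the integral closure of $V$ in the relevant extension field is completely integrally closed, and deduce this from the fact that the valuation rings occurring there have rank one. The paper argues globally: it sets $U$ equal to the integral closure of $V$ in $Q(W)$, writes $U=\bigcap_\fm U_\fm$, observes via Proposition \ref{valuationext} that each $U_\fm$ is a rank-one valuation ring and hence completely integrally closed, and concludes $W\subseteq U$. You argue element by element and reprove the completely-integrally-closed fact inline via the Archimedean property (that $n\,v'(x)\ge c$ for all $n$ forces $v'(x)\ge 0$). Two simplifications are available. First, the general rank-preserving extension theorem (Gauss extensions along a transcendence basis plus Zorn) is unnecessary: the linear-dependence argument you apply to $x$ applies to every element of $W$, so $Q(W)/K$ is algebraic, and rank-one valuations extend to algebraic extensions with rank preserved. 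Second, when you conclude that $x$ lies in the integral closure of $V$ in $K(x)$, the standard characterization is the intersection of \emph{all} valuation rings of $K(x)$ containing $V$, not just those dominating $V$; but since $V$ has rank one and $K(x)/K$ is algebraic, the only non-dominating such ring is the trivial one $K(x)$, so the conclusion stands.
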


\begin{proof}
Notice that $Q(V) \to Q(W)$ is an algebraic field extension. Let $U$ be
the integral closure of $V$ in $Q(W)$ and let
$\MSpec(U)$ denote the set of all maximal ideals. Then
for any maximal ideal $\fm \in \MSpec(U)$,
$U_{\mathfrak{m}}$ is a valuation ring of rank one by Proposition \ref{valuationext}. Now we can write
$U=\bigcap _{\fm \in \MSpec(U)} U_{\mathfrak{m}}$ by
\cite[Theorem 4.7]{Mat86}. Since $U_{\mathfrak{m}}$ is completely integrally
closed in $Q(U_{\mathfrak{m}})$, $U$ is completely integrally
closed in $Q(W)$. Since $V \to W$ is an almost integral extension and $W \subseteq Q(W)$, it follows that $W \subseteq U$. Therefore every element
of $W$ is integral over $V$. 
\end{proof}

\subsection{Normalized length over a valuation ring of rank one}
Let us briefly recall the \textit{normalized length} that is defined in the category of torsion modules over a valuation ring of rank one. So let $V$ be a valuation ring of rank one with a unique maximal ideal $\fm_V$, which is assumed to be either discrete or non-discrete. Denote by $\textbf{Mod}_{\{\fm_V\}}$ the category of $V$-modules with support at the maximal ideal $\{\fm_V\}$. For $M \in \textbf{Mod}_{\{\fm_V\}}$, one can define a well-behaved length
$$
\lambda_{\infty}(M) \in \mathbf{R}_{\ge 0} \cup \{\infty\}.
$$
If $M$ does not belong to $\textbf{Mod}_{\{\fm_V\}}$, then put $\lambda_\infty(M)=\infty$ for completeness of logic. Now we recall the definition and some properties. For more details, see \cite{GR07}, \cite{GR18}, \cite{Sch13} and \cite{Shi07}.

First of all, we start with finitely generated torsion $V$-modules. Let $M \in \textbf{Mod}_{\{\fm_V\}}$ be a finitely generated module. Then the $0$-th Fitting ideal $F_{0}(M)^a$ is well-defined (\cite[Lemma 2.3.20]{GR03}). And let $\Div(V^{a})$ be the topological group of non-zero fractional ideals of $V^{a}$ defined in \cite[6.1.16]{GR03} and let $\widehat{\Gamma_{V}}$ be the completion of the value group of $V$ with respect to some uniform structure defined in \cite[6.1.18]{GR03}. Then for every non-zero ideal $I \in \Div(V^{a})$, there exist some filtered ordered set $S$ and principal ideals $\{J_{i}~|~i \in S\}$, which converge to $I$. Let $\gamma_{i}$ be the value of a generator of $J_{i}$. Then $\{\gamma_{i}\}$ converge to some element $\hat{\gamma} \in \widehat{\Gamma_{V}}$. So assigning $I \mapsto |I| := \hat{\gamma}$, we obtain the homomorphism of topological groups $|\cdot| : \Div(V^a) \to \widehat{\Gamma_V}$. 

Moreover, this homomorphism $|\cdot|$ is an isomorphism by \cite[Lemma 6.1.19]{GR03}. Now remarking $F_{0}(M)^a \in \Div(V^a)$, we can define
$$
\lambda_{\infty}(M) := |F_{0}(M)^{a}| \in \log \widehat{\Gamma_{V}},
$$
where $\log\widehat{\Gamma_{V}}$ is the group regarding the multiplicative group $\Gamma_{V}$ as the additive group. Fix an embedding $\log \widehat{\Gamma_{V}} \hookrightarrow \mathbf{R}_{\geq 0}$ (Since $V$ is a rank one valuation ring, there exists an embedding $\Gamma_V \hookrightarrow \mathbf{R}_{\geq 0}$). Then we define the function $\lambda_\infty(M) \in \mathbf{R}_{\geq 0}$. 

Next let $M \in \textbf{Mod}_{\{\fm_V\}}$ be an arbitrary module. Since we have the inequality $\lambda_{\infty}(N) \leq \lambda_{\infty}(M)$ for a finitely generated submodule $N \subseteq M$, we define the function $\lambda_\infty(M) \in \mathbf{R}_{\geq 0}$ as follows:
$$
\lambda_{\infty}(M):=\sup\big\{\lambda_{\infty}(N)~\big|~N \subseteq M~\text{is a finitely generated $V$-module}\big\} \in \mathbf{R}_{\geq 0} \cup \{\infty\}.
$$

The normalized length has the following properties.

\begin{proposition}
\label{normalized1}
The following assertions hold.
\begin{enumerate}
\item
Let $0 \to L \to M \to N \to 0$ be a short exact sequence of $V$-modules in $\rm{\bf{Mod}}_{\{\fm_V\}}$. Then
$$
\lambda_{\infty}(M)=\lambda_{\infty}(L)+\lambda_{\infty}(N).
$$

\item
For the above short exact sequence and $a, b \in \fm_{V}$, 
$$
\lambda_{\infty}(abM) \leq \lambda_{\infty}(aL)+\lambda_{\infty}(bN).
$$

\item
If $\lambda_{\infty}(M)=0$, then $M$ is $\fm_V$-almost zero, and if $M$ is contained in a finitely presented $V$-module in $\rm{\bf{Mod}}_{\{\fm_V\}}$, then $M=0$.

\item
If a $V$-module $M$ is an ${\fm_{V}}$-almost finitely generated module in $\rm{\bf{Mod}}_{\{\fm_V\}}$, then $\lambda_\infty(bM)<\infty$ for any $b \in \fm_{V}$.
\end{enumerate}
\end{proposition}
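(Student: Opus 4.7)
The plan is to treat the four assertions together, since they all flow from the definition of $\lambda_\infty$ via the $0$-th Fitting ideal and the isomorphism $|\cdot| : \Div(V^a) \to \widehat{\Gamma_V}$. These statements are standard and proved in \cite{GR07}, \cite{GR18}, \cite{Sch13}, and \cite{Shi07}, so I would follow the treatment there and keep the sketch brief.

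For (1) I would first verify additivity when $L, M, N$ are finitely generated. In that case $F_0(M)^a = F_0(L)^a \cdot F_0(N)^a$ in $\Div(V^a)$ (multiplicativity of the $0$-th Fitting ideal on short exact sequences, \cite[Lemma 2.3.20]{GR03}), and since $|\cdot|$ is a group homomorphism, the equality $\lambda_\infty(M) = \lambda_\infty(L) + \lambda_\infty(N)$ follows. The passage to arbitrary modules in $\mathbf{Mod}_{\{\fm_V\}}$ is via the sup definition: given a finitely generated $K \subseteq M$, the exact sequence $0 \to K \cap L \to K \to \pi(K) \to 0$ has the two outer terms finitely generated, and cofinality of such $K$'s among pairs of finitely generated submodules of $L$ and $N$ yields the additivity in general.

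For (2) I would apply (1) to the short exact sequence
\[
0 \to abM \cap L \to abM \to abN \to 0,
\]
so that $\lambda_\infty(abM) = \lambda_\infty(abM \cap L) + \lambda_\infty(abN) \le \lambda_\infty(abM \cap L) + \lambda_\infty(bN)$, using $abN \subseteq bN$ and monotonicity. The remaining step, and in my view the main technical obstacle, is to show $\lambda_\infty(abM \cap L) \le \lambda_\infty(aL)$; equivalently, rewriting everything via $\lambda_\infty(cM) = \lambda_\infty(M) - \lambda_\infty(M[c])$, one must prove $\lambda_\infty(M[ab]) \ge \lambda_\infty(L[a]) + \lambda_\infty(N[b])$. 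This would come from the snake lemma applied to multiplication by $ab$ on $0 \to L \to M \to N \to 0$, combined with the observation that the composition $M[ab] \xrightarrow{a} M[b] \xrightarrow{\pi} N[b]$ has kernel containing $L[a]$, so that additivity on the induced exact sequences provides the required lower bound. Keeping track of torsion correction terms is where the bookkeeping is delicate.

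For (3), if $\lambda_\infty(M) = 0$ then every finitely generated submodule $N \subseteq M$ has $|F_0(N)^a| = 0$, forcing the almost class of $F_0(N)$ to be $V^a$, i.e.\ $\fm_V \subseteq F_0(N) \subseteq \Ann(N)$, hence $\fm_V N = 0$. Taking the union over $N$ gives $\fm_V M = 0$, so $M$ is $\fm_V$-almost zero. If moreover $M$ sits inside a finitely presented $P \in \mathbf{Mod}_{\{\fm_V\}}$, then over the valuation ring $V$, $P$ decomposes as a finite direct sum of cyclic torsion modules $V/(c_i)$; using that $\fm_V$ is idempotent, $V/(c_i)[\fm_V] = 0$ for each $i$, whence $P[\fm_V] = 0$ and therefore $M = 0$. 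Finally, (4) is immediate: by Lemma \ref{almostfg}, for any $b \in \fm_V$ there is a finitely generated submodule $M_0 \subseteq M$ with $bM \subseteq M_0$, so monotonicity gives $\lambda_\infty(bM) \le \lambda_\infty(M_0) < \infty$, the latter being finite because the Fitting ideal of a finitely generated torsion module has a well-defined value in $\widehat{\Gamma_V}$.
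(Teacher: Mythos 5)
Your treatment of parts (1), (3) and (4) is reasonable, though the paper itself offers no argument here and simply cites \cite[Prop.~14.5.12, Thm.~14.5.75, Lemmas~14.5.80, 14.5.83]{GR18}, so there is nothing in the paper to compare against at that level of detail. The real issue is part (2): the intermediate inequality you reduce to, namely $\lambda_\infty(abM \cap L) \le \lambda_\infty(aL)$, is false.

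Take $V$ a rank-one valuation ring with value group $\mathbf{Q}$, a pseudo-uniformizer $\varpi$ of value $1$, and the short exact sequence $0 \to V/\varpi V \xrightarrow{\varpi} V/\varpi^2 V \to V/\varpi V \to 0$, with $a=b=\varpi^{1/2}$. Then $abM=\varpi M=\varpi V/\varpi^2 V$ and $L$ sits inside $M$ as $\varpi V/\varpi^2 V$, so $abM\cap L=\varpi V/\varpi^2 V$ has $\lambda_\infty=1$, while $\lambda_\infty(aL)=\lambda_\infty(\varpi^{1/2}V/\varpi V)=1/2$. The target inequality $\lambda_\infty(abM)\le\lambda_\infty(aL)+\lambda_\infty(bN)$ does hold (with equality, $1\le 1/2+1/2$), but your decomposition wastes $\lambda_\infty(bN)-\lambda_\infty(abN)=1/2$ by bounding $\lambda_\infty(abN)=0$ above by $\lambda_\infty(bN)=1/2$, and that lost mass cannot be recovered on the $L$-side. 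Your alternative via torsion modules has the same defect: in this example the composite $M[ab]\xrightarrow{a}M[b]\to N[b]$ is the zero map while $N[b]$ has length $1/2$, so the composite is not surjective and additivity on that sequence does not give the lower bound $\lambda_\infty(M[ab])\ge\lambda_\infty(L[a])+\lambda_\infty(N[b])$ without controlling the cokernel.

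A decomposition that does work is to write $abM=a(bM)$ and restrict the original sequence to $bM$ (not to $abM$): one has $0\to bM\cap L\to bM\to bN\to 0$, hence $\lambda_\infty(bM)=\lambda_\infty(bM\cap L)+\lambda_\infty(bN)$. Since $(bM\cap L)[a]\subseteq (bM)[a]$, one gets
$$\lambda_\infty(abM)=\lambda_\infty(bM)-\lambda_\infty\bigl((bM)[a]\bigr)\le\lambda_\infty(bM\cap L)-\lambda_\infty\bigl((bM\cap L)[a]\bigr)+\lambda_\infty(bN)=\lambda_\infty\bigl(a(bM\cap L)\bigr)+\lambda_\infty(bN)\le\lambda_\infty(aL)+\lambda_\infty(bN).$$
I would either supply this argument or, as the paper does, simply cite \cite[Lemma~14.5.80]{GR18}.
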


\begin{proof}
$(1)$ is \cite[Proposition 14.5.12]{GR18}, $(2)$ is \cite[Lemma 14.5.80]{GR18}, $(3)$ is \cite[Proposition 14.5.12]{GR18} and \cite[Theorem 14.5.75]{GR18}, and $(4)$ is \cite[Lemma 14.5.83]{GR18}.
\end{proof}

\begin{remark}
Even if $\lambda_\infty(M)=0$, it may not be true that $M=0$. Such an example is provided by $M=V/\fm_V$.
\end{remark}

The most important part of normalized length is contained in the following theorem.

\begin{theorem}[Frobenius pull-back formula]
\label{Frobeniuspullback}
Suppose that $M$ is a $V/pV$-module. Then
$$
\lambda_\infty(M^{[F]})=\frac{1}{p} \cdot \lambda_\infty(M).
$$
Here, $M^{[F]}$ is regarded as a $V$-module through the restriction of scalars via the Frobenius endomorphism on $V/pV$.
\end{theorem}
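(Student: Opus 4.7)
My plan is to reduce to cyclic modules via the structure theorem for modules over a valuation ring, verify the formula in the cyclic case by a direct computation with annihilator ideals, and then extend to arbitrary modules via additivity and the supremum definition of $\lambda_\infty$.

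For the cyclic case, fix $\pi \in V$ with $0 < v(\pi) \leq v(p)$ and set $M = V/\pi V$. The Frobenius-twisted $V$-module $M^{[F]}$ is cyclic, generated by the class of $1$ under $v \cdot_F \overline{x} = \overline{v^p x}$, with annihilator
\[
\Ann_V(M^{[F]}) = \{v \in V : v^p \in \pi V\} = \{v \in V : v(v) \geq v(\pi)/p\}.
\]
Under the isomorphism $|\cdot| \colon \Div(V^a) \xrightarrow{\sim} \widehat{\Gamma_V}$ recalled before Proposition~\ref{normalized1}, the class of this fractional ideal has value $v(\pi)/p$. Since $F_0(M^{[F]}) = \Ann_V(M^{[F]})$ for a cyclic module, we conclude $\lambda_\infty(M^{[F]}) = v(\pi)/p = \frac{1}{p}\lambda_\infty(M)$.

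By the classical structure theorem, every finitely presented module over a valuation ring decomposes as a direct sum $\bigoplus_{i=1}^{r} V/\pi_i V$, so additivity of $\lambda_\infty$ on short exact sequences (Proposition~\ref{normalized1}(1)) extends the formula to arbitrary finitely presented $V/pV$-modules. For a general $M$, the defining supremum $\lambda_\infty(M) = \sup_N \lambda_\infty(N)$ over finitely generated $V$-submodules yields the reduction: every $V$-submodule $N \subseteq M$ is automatically a $V$-submodule of $M^{[F]}$ (since $v \cdot_F n = v^p n \in N$), and conversely any finitely generated $V$-submodule of $M^{[F]}$ is contained in $N_0^{[F]}$ for the $V$-submodule $N_0 \subseteq M$ spanned by the same elements. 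Combined with the finitely presented case, these two inclusions give matching inequalities in both directions.

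The main obstacle is the last approximation: over a non-discrete rank-one valuation ring a finitely generated torsion module need not be finitely presented, so one has to compute its Fitting ideal as a filtered colimit of finitely presented approximations and use the continuity of $|\cdot|$ on $\Div(V^a)$ to transport the cyclic-case formula across the colimit. Once this is handled the two suprema coincide and the formula follows.
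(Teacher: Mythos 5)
The paper does not prove Theorem~\ref{Frobeniuspullback}; it simply cites \cite[Proposition 4.3.15]{GR07}. Your proposal is therefore a genuine attempt at a proof the paper omits, and the outline (cyclic case by an annihilator computation, then finitely presented modules by the structure theorem, then general modules by the supremum) is a natural strategy. But it leaves two real gaps, and I think they are harder to close than you suggest.

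The first you flag yourself: over a non-discrete rank-one valuation ring a finitely generated torsion module need not be finitely presented, so the structure theorem is unavailable exactly where your supremum argument needs it, since $\lambda_\infty$ of a general module is a supremum over \emph{finitely generated} submodules. Your proposed fix (filtered colimits of finitely presented approximations plus continuity of $|\cdot|$ on $\Div(V^a)$) controls $\lambda_\infty(N_0)$ for finitely generated $N_0$, but it does not obviously control $\lambda_\infty(N_0^{[F]})$: a surjection $P\twoheadrightarrow N_0$ from a finitely presented module does not yield a surjection of $V$-modules $P^{[F]}\twoheadrightarrow N_0^{[F]}$ with $P^{[F]}$ finitely presented, because restriction of scalars along Frobenius need not preserve finite generation, let alone finite presentation. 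So the colimit you want to transport the formula across is not there on the twisted side. The second gap is already present in your cyclic case: you assert that $M^{[F]}$ is cyclic, generated by $\overline{1}$, but $V\cdot_F\overline{1}$ is precisely the set of $p$-th powers in $V/\pi V$, which equals $V/\pi V$ only when the Frobenius on $V/\pi V$ is surjective. Without that hypothesis (which the statement does not carry, and which you do not invoke) $M^{[F]}$ need not be cyclic, and the identity $F_0(M^{[F]})=\Ann_V(M^{[F]})$ on which your computation rests fails. In the paper the formula is applied only to perfectoid rings, where this surjectivity is available, but your blind argument does not use it. The way \cite{GR07} avoid both problems is to work in the almost category throughout, computing with almost Fitting ideals of almost finitely generated modules and the continuity of $|\cdot|$ directly, rather than reducing to a classical direct-sum decomposition; as written your argument does not close without importing that machinery or adding a Frobenius-surjectivity hypothesis.
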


\begin{proof}
This is \cite[Proposition 4.3.15]{GR07}.
\end{proof}

\begin{proposition}
\label{almostfinite}
Assume that $V$ is a valuation ring of rank one with field of fractions $K$ and a basic setup $(V,I)$. Let $W$ be the integral closure of $V$ in a finite separable extension of $K$. Then $W$ is an $I$-almost finitely presented $V$-module.
\end{proposition}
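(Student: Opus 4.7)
The plan is to exploit the trace pairing of the separable extension to sandwich $W$ between finitely generated free $V$-modules, then control the resulting torsion quotient by an inductive argument that uses the idempotency $I^{2} = I$ of the basic setup. First, I would choose a $K$-basis $e_{1}, \ldots, e_{n}$ of $L$ lying in $W$ (possible since $W[1/\varpi] = L$ for any nonzero $\varpi \in \fm_V$) and let $e_{1}^{*}, \ldots, e_{n}^{*} \in L$ be the dual basis with respect to the non-degenerate trace form $\Tr_{L/K}$. Because $V$ is integrally closed in $K$, $\Tr(we_{i}) \in V$ for every $w \in W$, and the expansion $w = \sum_{i} \Tr(we_{i}) e_{i}^{*}$ gives
\[
F_{0} := \bigoplus_{i} V e_{i} \subseteq W \subseteq F_{1} := \bigoplus_{i} V e_{i}^{*}, \qquad d \cdot F_{1} \subseteq F_{0},
\]
where $d = \det(\Tr(e_{i} e_{j})) \neq 0$. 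Hence $M := W/F_{0}$ is a sub-$V$-module of the finitely presented torsion module $F_{1}/F_{0} \cong (V/dV)^{n}$.

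Next I would reduce the problem to showing that $M$ is $I$-almost finitely generated. A valuation ring is coherent (its finitely generated ideals are principal, hence finitely presented), so any finitely generated submodule of the finitely presented module $F_{1}$ is itself finitely presented. Thus, for any finitely generated $\widetilde{N} \subseteq W$ containing $F_{0}$, the inclusion $\widetilde{N} \hookrightarrow W$ is a map from a finitely presented $V$-module to $W$ whose cokernel is a quotient of $M$. If $M$ is $I$-almost finitely generated, we can choose $\widetilde{N}$ so that this cokernel is $\epsilon$-annihilated for any given $\epsilon \in I$, giving the $I$-almost finite presentation of $W$.

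The heart of the argument is then to prove that every sub-$V$-module $N$ of $(V/dV)^{n}$ is $I$-almost finitely generated, by induction on $n$. For $n = 1$, $N = J/(d)$ for an ideal $J$ of $V$ with $J \supseteq (d)$; given $\epsilon \in I \subseteq \fm_{V}$ one produces a principal ideal $(a) \subseteq J$ with $(d) \subseteq (a)$ and $\epsilon J \subseteq (a)$ by choosing $v(a)$ slightly below $v(\epsilon) + \inf v(J)$, using the density of $\Gamma_{V}$ in $\mathbf{R}$. For $n > 1$, the projection $(V/dV)^{n} \to V/dV$ onto the last coordinate yields $0 \to N' \to N \to N'' \to 0$ with $N' \subseteq (V/dV)^{n-1}$ and $N'' \subseteq V/dV$; factoring $\epsilon = \epsilon_{1} \epsilon_{2}$ with $\epsilon_{i} \in I$ (using $I^{2} = I$) and combining the approximations from the inductive hypothesis ($\epsilon_{1} N' \subseteq N_{1}$) and the base case ($\epsilon_{2} N'' \subseteq N_{2}$) produces $\epsilon N \subseteq N_{1} + \epsilon_{1} \widetilde{N}_{2}$, where $\widetilde{N}_{2} \subseteq N$ is a finitely generated lift of $N_{2}$.

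The main obstacle is the inductive step, where the axiom $I^{2} = I$ of the basic setup becomes indispensable: without the factorization $\epsilon = \epsilon_{1} \epsilon_{2}$ one cannot patch the two ends of the short exact sequence into a single finitely generated approximation of $N$. The base case for non-principal $J$ also depends on the density of $\Gamma_{V}$ in $\mathbf{R}$, which holds automatically for a non-discrete rank-one valuation ring; the discrete (DVR) case is classical since $W$ is honestly finitely presented by Krull--Akizuki.
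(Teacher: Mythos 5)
Your proof is correct, and the opening move—the trace-pairing sandwich $F_{0} \subseteq W \subseteq F_{1}$ with $d\,F_{1} \subseteq F_{0}$, where $d$ is the discriminant of the chosen integral basis—is the same conductor argument underlying the proof the paper cites (\cite[Proposition 6.3.8]{GR03}). Where you depart from that source is in the second half: Gabber--Ramero control the torsion quotient via Fitting-ideal machinery, whereas you give a direct inductive argument, peeling off one coordinate at a time and using $I^{2}=I$ to factor $\epsilon = \epsilon_{1}\epsilon_{2}$ so as to patch the two ends of the short exact sequence $0 \to N' \to N \to N'' \to 0$. This is more elementary and self-contained, at the price of being tailored to rank-one valuation rings (you lean on the density of the value group in $\mathbf{R}$, on finitely generated ideals being principal, and implicitly on the fact that the only nontrivial idempotent ideal of such a ring is $\fm_{V}$); the Fitting-ideal route is built to run in greater generality. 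Two small slips are worth repairing but do not affect the argument. First, $F_{1}/F_{0}$ is not in general $(V/dV)^{n}$: the inclusion $F_{0} \hookrightarrow F_{1}$ is multiplication by the Gram matrix $(\Tr(e_{i}e_{j}))$, so $F_{1}/F_{0} \cong \bigoplus_{i} V/d_{i}V$ with $\prod_{i} d_{i} = d$ up to units. Since each $V/d_{i}V$ embeds in $V/dV$ (via multiplication by $d/d_{i}$), or more simply since $W \subseteq d^{-1}F_{0}$, you still get $M = W/F_{0} \hookrightarrow (V/dV)^{n}$, which is all the induction needs. Second, in the base case you should pick $a \in J$ with $v(a)$ in the range $(\inf v(J),\ \min\{v(d),\, v(\epsilon)+\inf v(J)\}]$; the constraint $v(a) \leq v(d)$ is what guarantees $(d) \subseteq (a)$, and your phrasing mentions only the upper bound coming from $\epsilon$.
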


\begin{proof}
The proof uses only basic properties of trace mapping and Fitting ideals. For details, we refer the reader to \cite[Proposition 6.3.8]{GR03}.
\end{proof}

\section{The almost purity theorem in perfectoid valuation ring}
\label{perfectoidvaluation}
\subsection{Perfectoid valuation rings}

We introduce \textit{perfectoid valuation rings}.

\begin{definition}
\label{perfdval}
Let $p>0$ be a prime and let $V$ be a valuation ring such that $pV \ne V$. We say that $V$ is a \textit{perfectoid valuation ring} if the following hold:
\begin{enumerate}
\item
The valuation of $V$ is of rank one and not discrete.

\item
$V$ is complete in the valuation topology.

\item
The Frobenius endomorphism on $V/pV$ is surjective.
\end{enumerate}
\end{definition}

The definition includes the case $\mathbf{F}_p \subset V$, in which case $V$ is a perfect valuation ring. Remark that $(V,\fm_{V})$ is a basic setup. The following proposition follows from \cite[Lemma 3.2]{Sch12}
\begin{proposition}
\label{pthpower}
Let $V$ be a perfectoid valuation ring. Then the followings are hold.
\begin{enumerate}
\item
There exists an element $\varpi \in V$ such that $\varpi^{p}=pu$ for some $u \in V^{\times}$.
\item
For an element $\varpi \in V$ as in (1), $V$ is $\varpi$-adically complete and separated .
\end{enumerate}
\end{proposition}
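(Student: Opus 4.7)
The plan is to deduce part (1) from the assertion that the value group $\Gamma_V$ is $p$-divisible: any $\varpi \in V$ with $v(\varpi) = v(p)/p$ then automatically satisfies $v(\varpi^p) = v(p)$, so $u := \varpi^p/p$ lies in $V$ and has valuation zero, hence $u \in V^\times$ and $\varpi^p = pu$. Part (2) should then follow essentially formally from the material developed in \S\ref{Valuation}.

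To establish the needed $p$-divisibility, one first records a partial version: given $\gamma \in \Gamma_V$ with $0 < \gamma < v(p)$, pick $x \in V$ with $v(x) = \gamma$ and use surjectivity of the Frobenius on $V/pV$ to produce $\beta \in V$ with $\beta^p - x \in pV$; the ultrametric inequality together with $v(x) < v(p) \le v(pz)$ for $z = (\beta^p - x)/p$ forces $v(\beta^p) = v(x)$, so $v(\beta) = \gamma/p \in \Gamma_V$. Promoting this argument from $\gamma$ strictly below $v(p)$ to the borderline case $\gamma = v(p)$, which is the one actually needed to produce a uniformizer, requires a successive-approximation argument using the $p$-adic completeness of $V$; this is exactly the content of \cite[Lemma 3.2]{Sch12}, which we cite directly. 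The main subtlety lies in this last step, since Frobenius surjectivity alone does not force the $p$-th power of a naively chosen lift to have valuation exactly $v(p)$ with unit quotient $\varpi^p/p$.

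For part (2), let $\varpi$ be as in (1). Since $v(\varpi) = v(p)/p > 0$, we have $v(\varpi^n) \to \infty$, so $\varpi$ is topologically nilpotent in the sense of Proposition \ref{microbialVal}. By Proposition \ref{valtop} the valuation topology on $V$ then coincides with the $\varpi$-adic topology; since $V$ is by hypothesis complete in its valuation topology, Remark \ref{complete} yields that $V$ is $\varpi$-adically complete. Separatedness is immediate from $V$ having rank one: $\bigcap_{n \geq 0} \varpi^n V = \{x \in V : v(x) \geq n v(\varpi) \text{ for all } n \geq 0\} = \{0\}$, since $v(\varpi) > 0$ forces $v(x) = \infty$, i.e., $x = 0$.
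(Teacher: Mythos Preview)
Your proposal is correct and takes essentially the same approach as the paper: the paper's entire proof is the single sentence ``The following proposition follows from \cite[Lemma 3.2]{Sch12}'', and you likewise reduce the core of part (1) to that citation, while unpacking part (2) via Proposition~\ref{valtop} and Remark~\ref{complete}, which is exactly the purpose those statements serve in \S\ref{Valuation}. One small remark: your discussion of (1) tacitly assumes $v(p)<\infty$, i.e.\ the mixed-characteristic case; when $\mathbf{F}_p\subset V$ the statement is vacuous (take $\varpi=0$), so nothing is lost.
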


Next we define the tilt for a ring.
%\begin{definition}
%Let $V$ be a perfectoid ring and let $p>0$ be a prime number. Then
%$$
%V^{\flat}:=\vpl \{ \cdots \xrightarrow{F_{V}} V/\varpi V \xrightarrow{F_{V}} V/\varpi V \}
%$$
%is called the \textit{tilt} of $V$, where $F_{V}$ is the Frobenius endomorphism on $V/pV$.
%\end{definition}

\begin{definition}
Let $V$ be a ring and let $p>0$ be a prime number. Then
$$
V^{\flat}:=\vpl \{ \cdots \xrightarrow{F_V} V/pV \xrightarrow{F_V} V/pV \}
$$
is called the \textit{tilt} of $V$, where $F_{V}$ is the Frobenius endomorphism on $V/pV$.
\end{definition}

%We explain tilt to restict a perfectoid valuation ring. Before explaining the basic properties, we introduce the perfect element. Let $V$ be a ring satisfing the following codition.
%\begin{center}
%There is a nonzero divisor $\varpi \in V$ such that ${\varpi}^{\frac{1}{p}} \in V$ for all $n >0$ and ${\varpi}^{p} = pu$ for $u \in V^{\times}$.
%\end{center}
%This element is called \textit{perfect element}. If $\varpi \in V$ is a perfect element, then there exists an element ${\varpi}^{\flat} := (\varpi, {\varpi}^{\frac{1}{p}}, {\varpi}^{\frac{1}{p^{2}}}, \cdots ) \in V^{\flat}$.

The tilting operation is well-behaved for perfectoid valuation rings. See \cite[Lemma 3.4]{Sch12} for the next proposition.

\begin{proposition}
\label{perf}
Let $V$ be a perfectoid valuation ring and let $\varpi \in V$ be an element as in Proposition \ref{pthpower}. The following assertions hold.

\begin{enumerate}
\item
There exists an isomorphism $\vpl_{x \mapsto x^{p}} V \cong V^{\flat}$ as multiplicative monoids. Moreover, for an element $\varpi \in V$ as in Proposition \ref{pthpower}, one can choose a compatible system of elements $\{ \varpi^{\frac{1}{p^{n}}}\}_{n \geq 0}$ in V such that $\varpi^{\flat}:=(\ldots,\overline{\varpi^{\frac{1}{p}}},\overline{\varpi},0) \in V^{\flat}$ is well-defined.

\item
$V^{\flat}$ is a perfectoid valuation ring and $\varpi^{\flat}$-adically complete and separated, where $\varpi^{\flat}$ is an element defined as in (1).

\item
We have an isomorphism
$$
V^{\flat} \cong \vpl \{ \cdots \xrightarrow{F_{V}} V/\varpi V \xrightarrow{F_{V}} V/\varpi V \}.
$$
\item
There is a short exact sequence
$$
0 \to V^{\flat} \xrightarrow{\times \varpi^{\flat}} V^{\flat} \xrightarrow{\pr_{V}} V/\varpi V \to 0,
$$
where $\pr_{V}$ is the projection onto the first factor. Moreover, this short exact sequence gives an isomorphism $V/\varpi V \cong V^{\flat}/{\varpi}^{\flat}V^{\flat}$.
\end{enumerate}
\end{proposition}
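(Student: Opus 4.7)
The plan is to prove (1) first and then bootstrap the remaining parts. For (1), I would define a multiplicative map $\Phi \colon \vpl_{x \mapsto x^{p}} V \to V^{\flat}$ componentwise via reduction modulo $p$, and construct its inverse by the standard sharp-limit recipe: given $(\bar{a}_{n}) \in V^{\flat}$, choose arbitrary lifts $\tilde{a}_{n} \in V$ and set $x_{n} := \lim_{k \to \infty} \tilde{a}_{n+k}^{p^{k}}$. The elementary congruence that $u \equiv v \pmod{p^{m}}$ implies $u^{p} \equiv v^{p} \pmod{p^{m+1}}$, iterated against the relations $\tilde{a}_{n+k+1}^{p} \equiv \tilde{a}_{n+k} \pmod{p}$, makes this sequence $p$-adically Cauchy; the $\varpi$-adic (hence by Proposition \ref{pthpower} also $p$-adic) completeness of $V$ then guarantees convergence to a limit independent of the lifts, multiplicative in $(\bar{a}_{n})$, and satisfying $x_{n+1}^{p} = x_{n}$. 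To produce $\varpi^{\flat}$, I would invoke surjectivity of Frobenius on $V/pV$ to build inductively a compatible sequence $(\bar{a}_{n}) \in V^{\flat}$ with $\bar{a}_{0} = \overline{\varpi}$ and $\bar{a}_{n+1}^{p} = \bar{a}_{n}$, then define $\{\varpi^{1/p^{n}}\}_{n \geq 0}$ by $\Phi^{-1}((\bar{a}_{n})) = (\varpi^{1/p^{n}})_{n \geq 0}$.

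For (2) and (3), I would first install the ring structure on $V^{\flat}$ by the analogous limit recipe for addition, so that $V^{\flat}$ becomes a ring of characteristic $p$ on which Frobenius is bijective by construction; in particular Frobenius on $V^{\flat}/pV^{\flat} = V^{\flat}$ is trivially surjective. The domain property is immediate from (1): if $(x_{n})(y_{n}) = 0$, then some $x_{n}$ or $y_{n}$ vanishes, which propagates both forward (since $V$ is a domain and $x_{m+1}^{p} = x_{m}$) and backward (taking $p^{k}$-th powers). The rank-one valuation on $V^{\flat}$ is defined by $v^{\flat}((x_{n})) := v(x_{0})$, inheriting its axioms and rank from $V$. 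For (3), the same sharp-limit argument run modulo $\varpi$ instead of modulo $p$, again using $\varpi$-adic completeness of $V$, produces a multiplicative isomorphism $\vpl_{x \mapsto x^{p}} V \cong \vpl_{F} V/\varpi V$; composing with $\Phi^{-1}$ yields the desired identification. The $\varpi^{\flat}$-adic completeness of $V^{\flat}$ then follows because the $n$-th coordinate projection $V^{\flat} \to V/\varpi V$ has kernel $(\varpi^{\flat})^{p^{n}} V^{\flat}$ (a cofinal family for the $\varpi^{\flat}$-adic topology, verified as in part (4)), so that the $\varpi^{\flat}$-adic topology coincides with the limit topology on $V^{\flat}$, which is automatically complete.

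For (4), I would work inside the identification $V^{\flat} \cong \vpl_{F} V/\varpi V$ from (3), so that $\pr_{V}$ is literally the first projection $(\bar{a}_{n}) \mapsto \bar{a}_{0}$. Surjectivity of $\pr_{V}$ follows because the quotient $V/pV \twoheadrightarrow V/\varpi V$ (valid since $\varpi^{p} = pu$ gives $\varpi \mid p$) transports Frobenius surjectivity to $V/\varpi V$, so every element lifts to a Frobenius-compatible tower. Injectivity of multiplication by $\varpi^{\flat}$ is immediate from the domain property of (2). To identify $\ker(\pr_{V})$ with $\varpi^{\flat} V^{\flat}$, I would revert to $\vpl_{x \mapsto x^{p}} V$: if $\varpi \mid x_{0}$, then $\varpi = (\varpi^{1/p^{n}})^{p^{n}}$ divides $x_{0} = x_{n}^{p^{n}}$, which in the valuation ring $V$ forces $\varpi^{1/p^{n}} \mid x_{n}$; setting $y_{n} := x_{n}/\varpi^{1/p^{n}} \in V$ gives $y_{n+1}^{p} = x_{n+1}^{p}/\varpi^{1/p^{n}} = x_{n}/\varpi^{1/p^{n}} = y_{n}$, so $(y_{n}) \in V^{\flat}$ and $\varpi^{\flat} \cdot (y_{n}) = (x_{n})$. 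The induced isomorphism $V/\varpi V \cong V^{\flat}/\varpi^{\flat} V^{\flat}$ follows from the exact sequence. The main obstacle is the convergence argument underlying (1); once that is in place, every remaining step either parallels it or reduces to elementary manipulations within the resulting structure.
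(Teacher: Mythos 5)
Your proposal is correct and reconstructs the standard tilting argument from Scholze's \cite[Lemma 3.4]{Sch12}, which is exactly what the paper cites in lieu of giving a proof; all four parts are handled in essentially the same way as the cited source. One small point worth flagging: the element obtained as $\Phi^{-1}((\bar a_n))_0$ is only congruent to $\varpi$ modulo $p$, not equal to it, so the constructed $\varpi^{1/p^0}$ is a priori a unit multiple of the originally chosen $\varpi$; since any unit multiple still satisfies $\varpi^p\in pV^\times$, one simply replaces $\varpi$ by this element, and this is the usual tacit normalization behind the notation $\varpi^\flat$.
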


\subsection{Proof of the main theorem}
We recall that a ring map $f:A \to B$ is \textit{weakly \'etale} if both $A \to B$ and the diagonal map $\mu_{B}:B \otimes_{A} B \to B$ are flat (The latter is called \textit{weakly unramified}).

\begin{lemma}$($\cite[Theorem 3.5.13.]{GR03}$)$
Let $f:A \to B$ be a ring map of $\mathbf{F}_{p}$-algebras. If the map $f$ is weakly \'etale, then $F_{B/A} : A^{[F]} \otimes_{A} B \to B^{[F]}$ is an isomorphism. 
\end{lemma}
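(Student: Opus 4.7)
The plan is to show that $F_{B/A}$ is itself a weakly \'etale morphism of $A^{[F]}$-algebras, and then to force it to be an isomorphism by using the factorization $F_B = F_{B/A} \circ \iota$ of absolute Frobenius (where $\iota(b) = 1 \otimes b$) together with the idempotent-kernel consequence of weak unramifiedness.

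First, I would unwind the definitions. On simple tensors, $F_{B/A}(a \otimes b) = f(a) b^p$, and this is a map of $A^{[F]}$-algebras. Both source and target are weakly \'etale over $A^{[F]}$: the source $A^{[F]} \otimes_A B$ is the base change of the weakly \'etale map $f$ along the Frobenius $F_A : A \to A = A^{[F]}$, and weak \'etaleness is stable under base change (both flatness of the structure map and flatness of the multiplication descend); the target $B^{[F]}$ is the ring $B$ itself, and the structure map $A^{[F]} \to B^{[F]}$ agrees with $f$ as a ring map, hence is weakly \'etale. A morphism between two weakly \'etale algebras over a common base is weakly \'etale, so $F_{B/A}$ is weakly \'etale.

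Next, I would upgrade this to an isomorphism. Weak unramifiedness of $A \to B$ means $\mu_B : B \otimes_A B \to B$ is a flat surjection, and a flat surjection of commutative rings has an idempotent kernel: $\ker(\mu_B) = \ker(\mu_B)^2$. The factorization $F_B = F_{B/A} \circ \iota$ shows that the image of $F_{B/A}$ contains all $p$-th powers in $B$, and by $A^{[F]}$-linearity it contains the $f(A)$-submodule generated by $p$-th powers. Combining the idempotent-kernel identity for $\mu_B$ with the characteristic $p$ hypothesis should yield $f(A) \cdot B^p = B$, giving surjectivity of $F_{B/A}$. Injectivity would follow from the flatness of $F_{B/A}$ combined with a comparison of the two weakly \'etale structures over $A^{[F]}$.

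The main obstacle I anticipate is extracting the surjectivity. Weak \'etaleness of $F_{B/A}$ is rather formal, but in general a weakly \'etale map need not be an isomorphism, so one must genuinely exploit the specific nature of the relative Frobenius. The hardest substep is converting the formal identity $\ker(\mu_B) = \ker(\mu_B)^2$ into an explicit expression of each element of $B$ as a finite sum $\sum f(a_i) b_i^p$. Without any finiteness or finite presentation hypothesis on $f$, one cannot reduce to the classical \'etale situation or apply a Nakayama-type argument; the argument must proceed entirely through the flat-epimorphism-like structure of $\mu_B$ and the interaction between $F_B$ and the weak \'etaleness datum.
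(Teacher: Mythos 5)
Your first step is sound: $A^{[F]}\otimes_A B$ is weakly \'etale over $A^{[F]}$ by base change along $F_A$, the target $B^{[F]}$ is weakly \'etale over $A^{[F]}$ since the underlying ring map is just $f$, and the cancellation lemma for weakly \'etale maps (\cite[Lemma 3.1.2(v)]{GR03}) makes $F_{B/A}$ weakly \'etale, hence flat. But the gap you anticipate in the surjectivity step is real, and the route you sketch is the wrong one: trying to extract $b=\sum f(a_i)b_i^p$ directly from $\ker(\mu_B)=\ker(\mu_B)^2$ does not work without further input. The argument in \cite{GR03} instead shows that $F_{B/A}$ is a faithfully flat \emph{ring epimorphism} and then invokes the general fact that a faithfully flat epimorphism of rings is an isomorphism; surjectivity and injectivity fall out together, not separately.

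Faithful flatness is cheap: $\Spec(F_{B/A})$ is sandwiched between the homeomorphism $\Spec(F_B)$ and the projection $\Spec(A^{[F]}\otimes_A B)\to\Spec(B)$ (a base change of $\Spec(F_A)$, hence a homeomorphism), so $\Spec(F_{B/A})$ is bijective, and flat plus surjective on $\Spec$ gives faithfully flat. For the epimorphism property, suppose $g_1,g_2\colon B\to T$ satisfy $g_1\circ F_{B/A}=g_2\circ F_{B/A}$, i.e.\ $g_1\circ f=g_2\circ f$ and $g_1(b^p)=g_2(b^p)$ for all $b\in B$. Since $g_1,g_2$ agree on $f(A)$, there is a well-defined ring map $G\colon B\otimes_A B\to T$ with $G(b\otimes b')=g_1(b)g_2(b')$, and $g_1(b)-g_2(b)=G(b\otimes 1-1\otimes b)$. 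The ingredient you were missing is that weak unramifiedness gives strictly more than $I=I^2$ for $I=\ker(\mu_B)$: since $\mu_B$ is a flat surjection, $I$ is a \emph{pure} ideal, so for every $\xi\in I$ there is $e\in I$ with $\xi=e\xi$. Writing $e=\sum_k c_k\otimes c_k'$ with $\sum_k c_kc_k'=0$, in characteristic $p$ one has $e^p=\sum_k c_k^p\otimes c_k'^{\,p}$, hence $G(e)^p=G(e^p)=\sum_k g_1(c_k^p)g_2(c_k'^{\,p})=\sum_k g_2(c_k^p)g_2(c_k'^{\,p})=g_2\bigl(\bigl(\sum_k c_kc_k'\bigr)^p\bigr)=0$. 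Thus $1-G(e)$ is a unit, and $(1-G(e))G(\xi)=0$ forces $G(\xi)=0$; applied to $\xi=b\otimes 1-1\otimes b$ this gives $g_1=g_2$. Note finally that the paper itself offers no proof of this lemma, referring to \cite[Theorem 3.5.13]{GR03}, so the comparison is against the Gabber--Ramero argument rather than against anything internal to the paper.
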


\begin{proof}
This is \cite[Theorem 3.5.13]{GR03}.
\end{proof}

\begin{lemma}
\label{surjective}
Let $f:A \to B$ be a ring homomorphism.
Then $f$ is surjective if and only if there exists a faithfully flat $A$-algebra $C$ such that $C \to C \otimes_{A} B$ is surjective.
\end{lemma}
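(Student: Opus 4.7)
The plan is to handle the two directions separately, both of which are short and rest on the standard behavior of faithful flatness with respect to cokernels.

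For the ``only if'' direction, I would simply take $C = A$. Then the canonical map $C \to C \otimes_A B$ is identified with $f$ itself, which is surjective by hypothesis, so there is nothing further to check.

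For the ``if'' direction, assume $C$ is a faithfully flat $A$-algebra with $C \to C \otimes_A B$ surjective. Consider the cokernel $Q := \coker(f)$ and the right exact sequence
\[
A \xrightarrow{f} B \to Q \to 0.
\]
Tensoring with $C$ over $A$ preserves right exactness, yielding
\[
C \to C \otimes_A B \to C \otimes_A Q \to 0.
\]
By hypothesis the first arrow is surjective, which forces $C \otimes_A Q = 0$. Since $C$ is faithfully flat over $A$, vanishing of $C \otimes_A Q$ implies $Q = 0$, and therefore $f$ is surjective.

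There is no real obstacle here: the argument is the standard ``surjectivity descends along faithfully flat base change'' fact, and the only thing to keep in mind is to apply $-\otimes_A C$ to the cokernel sequence rather than to $f$ directly, so that right exactness of the tensor product (not flatness of $C$) delivers the needed surjection, and then faithful flatness is used solely to conclude $Q = 0$ from $C \otimes_A Q = 0$.
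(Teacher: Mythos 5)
Your proof is correct and coincides with the paper's own argument: take $C=A$ for the forward direction, and for the converse use right-exactness of $-\otimes_A C$ to get $C\otimes_A\operatorname{Coker}(f)=0$, then invoke faithful flatness to conclude $\operatorname{Coker}(f)=0$.
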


\begin{proof}
If $f$ is surjective, we just take $C=A$. If there exists a faithfully flat $A$-algebra $C$ such that $C \to C \otimes_{A} B$ is surjective, $C \otimes_{A} \coker f=0$. Since $C$ is faithfully flat, we have $\coker f=0$.
\end{proof}

\begin{lemma}
\label{proj}
Let $(V,I)$ be a basic setup as in Proposition \ref{almostNAK} and let $M$ be an almost finitely generated $V$-module.  If $M$ is an almost projective module, then the following holds. Assume that $\phi : N \to M$ is a homomorphism of $V$-modules such that $\coker{\phi}$ is killed by $\varpi^{\alpha}$ for a fixed positive rational number $\alpha \in \mathbf{Q}_{>0}$. Then for any $k>0$, there exists $g : M \to N$ such that $\phi \circ g = \varpi^{\frac{1}{p^{k}}+2\alpha} \id_{M}$.
\end{lemma}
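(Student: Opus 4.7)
The plan is to combine the concrete almost-splitting criterion of Proposition~\ref{almostsplit} with a direct lift of a splitting through $\phi$, using the freeness of a free $V$-module. First, I would apply Proposition~\ref{almostsplit} to the almost finitely generated, almost projective $V$-module $M$ with the element $\epsilon := \varpi^{1/p^{k}} \in I$. This produces an integer $n = n(\epsilon)$ together with $V$-linear maps $u : M \to V^{n}$ and $v : V^{n} \to M$ satisfying $v \circ u = \varpi^{1/p^{k}} \id_M$.

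Next, I would lift the map $\varpi^{2\alpha} v : V^{n} \to M$ through $\phi$. Because $\coker\phi$ is annihilated by $\varpi^{\alpha}$, and a fortiori by $\varpi^{2\alpha}$, we have $\varpi^{2\alpha} M \subseteq \im\phi$. Let $\{e_{1},\dots,e_{n}\}$ denote the standard basis of $V^{n}$. Then each element $\varpi^{2\alpha} v(e_{i})$ lies in $\im\phi$, so we may pick $n_{i} \in N$ with $\phi(n_{i}) = \varpi^{2\alpha} v(e_{i})$ and extend $V$-linearly to define $h : V^{n} \to N$ by $h(e_{i}) := n_{i}$; by construction $\phi \circ h = \varpi^{2\alpha} v$. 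Setting $g := h \circ u : M \to N$, one computes
\[
\phi \circ g \;=\; \phi \circ h \circ u \;=\; \varpi^{2\alpha}\, v \circ u \;=\; \varpi^{2\alpha + 1/p^{k}}\, \id_{M},
\]
which is the desired identity.

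There is no real obstacle in this argument: the almost-splitting of Proposition~\ref{almostsplit} encodes almost projectivity in a concrete, factorization-through-a-free-module form, and the freeness of $V^{n}$ reduces lifting through $\phi$ to choosing preimages of finitely many basis values, which exist thanks to the hypothesis on $\coker\phi$. The only mild point of care is that $\varpi^{2\alpha}$ must make sense as an element of $V$, which is guaranteed by the standing hypothesis of Proposition~\ref{almostNAK} that $\varpi$ admits a compatible system of $p$-power roots (and hence that $\varpi^{m/p^{n}}$ is well-defined for all relevant exponents).
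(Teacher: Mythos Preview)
Your argument is correct, and it takes a genuinely different route from the paper's proof. The paper works homologically: it sets $M'=\im\phi$, uses the long exact $\Ext$-sequence attached to $0\to K\to N\to M'\to 0$ to identify the obstruction to splitting as an element of $\Ext^1_V(M',K)$, and then bounds this group via the sequence $0\to M'\to M\to M/M'\to 0$, using that $\varpi^{1/p^k}$ kills $\Ext^1_V(M,K)$ (almost projectivity) and $\varpi^{\alpha}$ kills $\Ext^2_V(M/M',K)$. This yields $f:M'\to N$ with $\phi\circ f=\varpi^{1/p^k+\alpha}\id_{M'}$, and a further factor $\varpi^{\alpha}$ is spent to pass from $M'$ back to $M$.

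Your approach instead invokes the concrete factorization of Proposition~\ref{almostsplit} through a finite free module, after which lifting through $\phi$ is trivial because $V^n$ is free. This is shorter and more elementary; in fact it shows slightly more, since you could have used $\varpi^{\alpha}$ rather than $\varpi^{2\alpha}$ in the lifting step and obtained $\phi\circ g=\varpi^{1/p^k+\alpha}\id_M$. The paper's $\Ext$-computation, on the other hand, does not rely on the explicit form of Proposition~\ref{almostsplit} and would adapt more readily to settings where only the vanishing of higher $\Ext$-groups is known. Your caveat about $\varpi^{2\alpha}$ being well defined is apt; in the applications (Proposition~\ref{proj2}) all exponents lie in $\mathbf{Z}[1/p]$, so the available $p$-power roots suffice.
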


\begin{proof}
Take the exact sequence $N \to M \to \coker{\phi} \to 0$. Let $M'$ be the image of $\phi$. Then we obtain the short exact sequence $0 \to K \to N \to M' \to 0$, which induces the long exact sequence
$$
0 \to \Hom_{V}(M',K) \to \Hom_{V}(M' , N) \to \Hom_{V}(M' , M') \to \Ext^{1}_{V}(M' , K) \to \cdots.
$$
Next we take $0 \to M' \to M \to M/M' \to 0$ and it induces the long exact sequence
$$
\cdots  \to \Ext^{1}_{V}(M,K) \to \Ext^{1}_{V}(M',K) \to \Ext^{2}_{V}(M/M',K) \to \cdots.
$$
Then $\varpi^{\frac{1}{p^k}}\Ext^{1}_{V}(M,K) = 0$ for any $k>0$ by assumption and $\varpi^{\alpha} \Ext^{2}_{V}(M/M',K)=0$. So we obtain $\varpi^{\frac{1}{p^{k}}+\alpha} \Ext^{1}_{V}(M',K)=0$. Considering the first long exact sequence, we can find an element $f \in \Hom_{V}(M',N)$ such that $\phi \circ f = \varpi^{\frac{1}{p^{k}}+\alpha}\id_{M'}$. Finally, since $\varpi^{\alpha}M \subset M'$, we obtain $g=f \circ \varpi^{\alpha}$, as desired. 
\end{proof}

\begin{proposition}
\label{proj2}
Let $(V,I)$ be a basic setup as in Proposition \ref{almostNAK} and let $M$ be an almost finitely generated $V$-module such that $M/\varpi^{n}M$ is an almost projective $V/\varpi^{n}V$-module for any $n >0$. Suppose that $M$ and $V$ are $\varpi$-adically complete. Then $M$ is an almost projective $V$-module.
\end{proposition}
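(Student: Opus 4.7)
The plan is to verify the almost-splitting criterion of Proposition \ref{almostsplit}: for each $\epsilon \in I$, I need to produce $N \in \mathbf{N}$ and $V$-linear maps $u: M \to V^N$, $v: V^N \to M$ with $v \circ u = \epsilon \cdot \id_M$. Since elements of the form $\varpi^{1/p^k}$ are cofinal in $I$ and any splitting at exponent $\delta$ gives one at every larger exponent (by scaling $u$), it suffices to realize $v \circ u = \varpi^\delta \cdot \id_M$ for arbitrarily small positive rational $\delta$.

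First, using that $M$ is $I$-almost finitely generated (Lemma \ref{almostfg}), I fix a small $\alpha > 0$ and a $V$-linear map $v : V^r \to M$ whose cokernel is killed by $\varpi^\alpha$; its reduction $\bar v_n : (V/\varpi^n V)^r \to M/\varpi^n M$ then also has cokernel killed by $\varpi^\alpha$. Since by hypothesis $M/\varpi^n M$ is almost projective over $V/\varpi^n V$, Lemma \ref{proj} applied in the induced basic setup on $V/\varpi^n V$ yields, for every $k$, a map $g_n : M/\varpi^n M \to (V/\varpi^n V)^r$ satisfying $\bar v_n \circ g_n = \varpi^{1/p^k + 2\alpha} \cdot \id_{M/\varpi^n M}$.

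The heart of the argument is to assemble the $g_n$ into a compatible family under the reduction maps, so that by $\varpi$-adic completeness of $V^r$,
$$
\Hom_V(M, V^r) \cong \vpl_n \Hom_{V/\varpi^n V}\bigl(M/\varpi^n M,\, (V/\varpi^n V)^r\bigr),
$$
the limit $u := \vpl g_n$ furnishes a genuine $V$-linear map $M \to V^r$ satisfying $v \circ u = \varpi^{\delta'} \cdot \id_M$ for $\delta'$ slightly larger than $1/p^k + 2\alpha$. I proceed inductively on $n$: given compatible $g_n$, first take any lift $\tilde g_{n+1}$ of $g_n$ (possibly after multiplying by a small element of $I$) through the reduction $(V/\varpi^{n+1} V)^r \twoheadrightarrow (V/\varpi^n V)^r$; this lift exists up to a controlled almost factor because the obstruction sits in $\Ext^1_{V/\varpi^{n+1} V}\bigl(M/\varpi^{n+1} M,\, (\varpi^n V/\varpi^{n+1} V)^r\bigr)$, which is $I$-almost zero by the almost projectivity of $M/\varpi^{n+1} M$. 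Next, I correct the discrepancy $\bar v_{n+1} \tilde g_{n+1} - \varpi^\delta \id$, which lies in $\varpi^n \cdot \Hom_V(M/\varpi^{n+1} M, M/\varpi^{n+1} M)$, by composing the induced error endomorphism of $M/\varpi M$ with the section $g_1$ to produce a correcting term in $\varpi^n (V/\varpi^{n+1} V)^r$, iterating this cancellation to push the error to arbitrarily high order in $\varpi$.

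The main obstacle is precisely this inductive balancing act: at each stage one must absorb a small almost factor into the exponent to perform the lift, then cancel the lifting discrepancy without losing too much control, all while keeping the total accumulated loss $\delta' - (1/p^k + 2\alpha)$ below any prescribed positive bound. What makes the induction go through is the hypothesis that $M/\varpi^n M$ is almost projective for \emph{every} $n$ (not merely for $n=1$), as this is what guarantees the relevant $\Ext^1$-obstruction is $I$-almost zero at every stage. Once the compatible family is built and $u$ is obtained as its limit, the splitting $v \circ u = \varpi^{\delta'} \id_M$ together with the cofinality observation in the first paragraph yields the conclusion by Proposition \ref{almostsplit}.
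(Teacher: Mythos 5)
Your overall strategy parallels the paper's: fix a map from a finite free module onto $M$ that is surjective up to a small almost factor, produce almost-splittings of it modulo each power of $\varpi$ using Lemma~\ref{proj}, pass to the inverse limit via $\varpi$-adic completeness, and conclude by Proposition~\ref{almostsplit}. Where you diverge is in the mechanism for making the level-$n$ sections compatible under reduction. The paper introduces auxiliary submodules $N_{l}:=\pi^{-1}\bigl(\im(s_{n_{l-1}})\bigr)\subseteq P_n$, bounds the cokernel of the induced map $t_l$ by a bootstrapping trick, and produces the next section with values in $N_l/\varpi^l N_l$; you instead propose to lift $g_n$ directly through the $\Ext^1$-obstruction (which is almost zero by almost projectivity of $M/\varpi^{n+1}M$) and then correct the resulting discrepancy by hand.

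The gap is precisely in this compatibility step, which you rightly flag as ``the main obstacle'' but do not carry out. Lifting $g_n$ to $\tilde g_{n+1}$ only yields a lift of $\epsilon_n g_n$ for some $\epsilon_n\in I$, since the obstruction class in $\Ext^1_{V/\varpi^{n+1}V}\bigl(M/\varpi^{n+1}M,\,(\varpi^nV/\varpi^{n+1}V)^r\bigr)$ is merely almost zero, not zero; each stage therefore degrades the splitting relation. Your proposed correction of the discrepancy $\bar v_{n+1}\tilde g_{n+1}-\varpi^\delta\id$ requires almost-inverting $\bar v_{n+1}$ on the error term (for instance by composing with $g_1$), and this again costs almost factors, with no argument given that the iteration terminates or that the cumulative exponent stays finite. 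Without a concrete accounting — say fixing $\epsilon_n=\varpi^{\delta/2^{n+1}}$ so the total loss is bounded by $\varpi^\delta$ and verifying the correction step costs nothing beyond this — the exponent in the relation $\bar v_n\circ g_n=\varpi^{c_n}\id$ could grow without bound as $n\to\infty$, in which case no limit map $u$ with $v\circ u=\varpi^{\delta'}\id_M$ for a fixed finite $\delta'$ exists, and the argument does not close.
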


\begin{proof}
Since $M$ is almost finitely generated by assumption, we can find a $V$-module homomorphism $p_n:P_{n} \to M$ for any $n>0$ such that $P_{n}$ is a finite free module and $\varpi^{\frac{1}{p^{n}}} \coker{p_n}=0$. Remark that $P_n$ is $\varpi$-adically complete. Fix an integer $k>0$. Since $M/\varpi M$ is almost projective, there is a $V/\varpi V$-module map $s_{n_{1}}:M/\varpi M \to P_{n}/\varpi P_{n}$ such that
\begin{equation}
\label{almostsplit}
(p_n\text{ mod }\varpi) \circ s_{n_{1}}=\varpi^{\frac{2}{p^{n}}+\frac{1}{p^k}}\id_{M/\varpi M}
\end{equation}
by Lemma \ref{proj}. We will construct an almost splitting $s_{n_{2}}:M/\varpi^{2} M \to P_{n}/\varpi^{2} P_{n}$ out of $s_{n_{1}}$. Set $N_{2} := \pi^{-1}(\im(s_{n_1}))$, where $\pi : P_n \to P_n/\varpi P_{n}$. Then $N_2$ is a submodule of $P_n$ and the induced map $N_{2} \to M$ induces $t_1:N_{2}/\varpi N_{2} \to M/\varpi M$ and $t_2:N_{2}/\varpi^{2}N_{2} \to M/\varpi^{2}M$. By (\ref{almostsplit}), we obtain $\varpi^{\frac{2}{p^{n}}+\frac{1}{p^k}} \coker t_{1}=0$. Since $\coker t_{2}/\varpi \coker t_{2} \cong \coker t_{1}$, we obtain $\varpi^{\frac{2}{p^{n}}+\frac{1}{p^k}} \coker t_{2}=\varpi \coker t_{2}$. By using this identity repeatedly, we get
$$
\varpi^{\frac{2}{p^{n}}+\frac{1}{p^k}} \coker t_{2} = \varpi^{l-(l-1)\big(\frac{2}{p^{n}}+\frac{1}{p^k}\big)} \coker t_{2}.
$$
Since $l-(l-1)\big(\frac{2}{p^{n}}+\frac{1}{p^k}\big) \geq 2$ for $l \gg 0$, we have $\varpi^{\frac{2}{p^{n}}+\frac{1}{p^k}}\coker t_{2} = 0$. Since $M/\varpi^{2}M$ is an almost projective $V/\varpi^{2}V$-module, there is a $V/\varpi^{2}V$-module map $g: M/\varpi^{2}M \to N_{2}/\varpi^{2}N_{2}$ such that $t_2 \circ g=\varpi^{\frac{4}{p^{n}}+\frac{3}{p^{k}}}\id_{M/\varpi^{2}M}$ by Lemma \ref{proj}. Then the composition of $g$ with the natural map $N_{2}/\varpi^{2}N \to P_{n}/\varpi^{2}P_{n}$ will yield $s_{n_{2}}$. In a similar manner, we can construct $s_{n_{l+1}}$ from $s_{n_{l}}$ inductively. Finally, $P_{n}$ and $M$ are $\varpi$-adically complete, so we obtain $s_{n} =\vpl_{l>0} s_{n_{l}}$ such that $p_{n} \circ s_{n}=\varpi^{\frac{4}{p^{n}}+\frac{3}{p^{k}}} \id_{M}$. As $n, k$ are chosen arbitrarily, we conclude that $M$ is almost projective.
\end{proof}

\begin{theorem}
\label{AlmostPurityTheorem}
Let $V$ be a perfectoid valuation ring with field of fractions $K$, and let $W$ be the integral closure of $V$ in a finite \'etale extension $K \to L$.
\begin{enumerate}

\item
$W$ is also a perfectoid valuation ring. 

\item
$V \to W$ is faithfully flat and almost finite \'etale.
\end{enumerate}
\end{theorem}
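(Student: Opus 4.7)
My plan is to establish the theorem in three stages: set up the valuation ring structure of $W$, use the Frobenius pull-back formula to prove surjectivity of Frobenius on $W/pW$, and conclude almost finite \'etaleness via Proposition~\ref{proj2} together with Proposition~\ref{almostfinite}. For the first stage, since $V$ is a complete rank-one valuation ring, it is henselian, so Propositions~\ref{integral1} and~\ref{valuationext} make $W$ a local valuation ring. The inclusion $\Gamma_V \subseteq \Gamma_W$ preserves rank one and non-discreteness, and $W$ is complete in the valuation topology because $L = W[1/p]$ is complete (as a finite extension of the complete field $K$) and $W$ is closed in $L$; by Remark~\ref{complete}, $W$ is $\varpi$-adically complete. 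Flatness of $V \to W$ is automatic since torsion-free modules over a valuation ring are flat, and faithful flatness follows from $W$ being local and dominating $V$.

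The second stage is the heart of the argument. I consider the map $\bar F\colon W/\varpi W \to W/\varpi^{p} W = W/pW$, $x \mapsto x^{p}$, which is well defined by the standard binomial calculation (using $p \in \varpi^{p} V$ and $p \mid \binom{p}{i}$ for $1 \le i \le p-1$) and injective since $v(x^{p}) \ge v(\varpi^{p})$ forces $v(x) \ge v(\varpi)$. Viewed as a $V$-linear map $W/\varpi W \to (W/pW)^{[F]}$, its source and target have equal normalized length: by additivity (Proposition~\ref{normalized1}(1)) one gets $\lambda_{\infty}(W/pW) = p\,\lambda_{\infty}(W/\varpi W)$, while Theorem~\ref{Frobeniuspullback} gives $\lambda_{\infty}((W/pW)^{[F]}) = \lambda_{\infty}(W/pW)/p$. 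Hence $\lambda_{\infty}(\coker \bar F) = 0$ and the cokernel is $\mathfrak{m}_{V}$-almost zero by Proposition~\ref{normalized1}(3). Transferring from the Frobenius-twisted to the standard $V$-action (using $pC = 0$ plus surjectivity of Frobenius on $V/pV$ to write any $w \in \mathfrak{m}_{V}$ as $v^{p} + pv'$ with $v \in \mathfrak{m}_{V}$), this becomes $\mathfrak{m}_{V}\cdot W/pW \subseteq F(W/pW)$. To upgrade to honest surjectivity I use: (a) $\mathfrak{m}_{V} W = \mathfrak{m}_{W}$, because the $\varpi^{1/p^{n}} \in \mathfrak{m}_{V}$ have arbitrarily small positive $v_{W}$-valuation, so their multiples generate every element of positive $v_{W}$-valuation; (b) $k_{W}$ is perfect, since $k_{V}$ is perfect ($V$ being perfectoid makes Frobenius surjective on $V/\mathfrak{m}_{V}$) and algebraic extensions of perfect fields are perfect. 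For $a \in W^{\times}$, lifting $(\bar a)^{1/p} \in k_{W}$ to $x \in W$ gives $a - x^{p} \in \mathfrak{m}_{W} \subseteq F(W) + pW$, hence $a \in F(W) + pW$; combined with $\mathfrak{m}_{W} \subseteq F(W) + pW$ this yields $F(W) + pW = W$, so $W$ is a perfectoid valuation ring.

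For the third stage, Proposition~\ref{almostfinite} gives that $W$ is $\mathfrak{m}_{V}$-almost finitely presented over $V$, and Proposition~\ref{proj2} reduces almost projectivity of $W$ over $V$ to showing $W/\varpi^{n} W$ is an almost projective $V/\varpi^{n} V$-module for every $n \ge 1$. For $n=1$ I would use the isomorphism $V/\varpi V \cong V^{\flat}/\varpi^{\flat} V^{\flat}$ from Proposition~\ref{perf}(4) to transfer to the tilted characteristic-$p$ picture, where $V^{\flat}$ and the (now well-defined) $W^{\flat}$ are perfect valuation rings and almost projectivity of $W^{\flat}/\varpi^{\flat}W^{\flat}$ over $V^{\flat}/\varpi^{\flat}V^{\flat}$ is accessible by Frobenius-isomorphism arguments. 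The cases $n \ge 2$ follow by the inductive lifting of almost-splittings modelled on Proposition~\ref{proj2}, with the Frobenius pull-back formula controlling the error exponents at each step. Together with almost finite presentation and flatness, this yields that $V \to W$ is almost finite \'etale.

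The principal technical obstacle is the inductive lift in the third stage: carrying an almost-splitting from mod $\varpi^{n}$ to mod $\varpi^{n+1}$ while keeping the cumulative exponent inside $\mathfrak{m}_{V}$ demands a careful Frobenius-lifting device, with the pull-back formula used to absorb the error at each stage so that the limit produces an honest almost-splitting of $W$ over $V$.
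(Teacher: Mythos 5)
Your stages 1 and 2 are correct and track the paper's strategy, with two pleasant simplifications. You dispense with the paper's reduction to algebraically closed residue field by noting that $k_W$ is perfect (being algebraic over the perfect field $k_V$), and then using $\fm_V W = \fm_W$ to finish. You also work with $\lambda_\infty(W/\varpi W)$ and $\lambda_\infty(W/pW)$ directly, where the paper instead works with $\lambda_\infty(bB)$ and $\lambda_\infty(b^p C)$ for $b\in\fm_V$; this is legitimate here because any finitely generated $V$-submodule of $W/\varpi W$ lifts to a finitely generated torsion-free $V$-submodule of $W$, which is free of rank at most $[L:K]$, so $\lambda_\infty(W/\varpi W)\le [L:K]\cdot v(\varpi)<\infty$ (and likewise for $W/pW$). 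The length equation then gives $\lambda_\infty(\coker\bar F)=0$ without ambiguity, whereas the paper's $b$-multiplication is the safe general-purpose device since Proposition~\ref{normalized1}(4) only guarantees finiteness after multiplying by $b$.

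Stage 3, however, has a genuine gap. You aim to prove that $W$ is almost projective over $V$, reducing via Proposition~\ref{proj2} to almost projectivity of $W/\varpi^n W$ over $V/\varpi^n V$. But almost projectivity of $W$ over $V$ is not the content of almost finite \'etaleness: it amounts to almost flatness plus almost finite presentation, both of which you already have (flatness is automatic for a torsion-free module over a valuation ring). What is actually required is almost \emph{unramifiedness}, i.e.\ that $W$ be almost projective over the enveloping algebra $W\otimes_V W$, and correspondingly Proposition~\ref{proj2} must be applied with base ring $W\otimes_V W$ (which is $\varpi$-adically complete by \cite[Proposition 4.1]{NS20}) and module $W$. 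The mod-$\varpi^n$ input is then almost projectivity of $W/\varpi^n W$ over $W/\varpi^n W\otimes_{V/\varpi^n V}W/\varpi^n W$; the $n=1$ case is transferred through $(\ref{tiltingisom})$ to the tilts, where the characteristic-$p$ almost purity for $V^\flat\to W^\flat$ (proved first) supplies the answer. Your description of the $n\ge 2$ step is also off: Proposition~\ref{proj2} does not pass from $n$ to $n+1$ but from ``all $n$'' to the limit, and the passage from $n=1$ to all $n$ is accomplished in the paper via the deformation result \cite[Theorem 5.2.12(i)]{GR03}, not by an inductive splitting argument. Finally, the Frobenius pull-back formula (Theorem~\ref{Frobeniuspullback}) plays no role in this stage --- it is a statement about normalized lengths and has nothing to do with lifting almost splittings; the error exponents appearing in Proposition~\ref{proj2} are controlled by almost finite generation alone.
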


\begin{proof}
First we prove the both assertions in the prime characteristic $p>0$ case. Remark that $W$ is a valuation ring of rank one and its valuation is induced by the valuation of $V$. In order to prove the completeness of $W$, it suffices to prove that $W$ is complete and separated in the $\varpi$-adic topology, where $\varpi \in V$ is an element that defines the adic topology of $V$. Since $\varpi$ is topologically nilpotent in $W$, $W$ is equipped with the $\varpi$-adic topology. Applying \cite[Proposition 4.1]{NS20}, we find that $W$ is also $\varpi$-adically complete and separated. Finally, since $K$ is perfect and $K \hookrightarrow L$ is a finite \'etale extension, it follows that $L$ is perfect and $W$ is thus perfect. The Frobenius endomorphism on $W/pW$ is surjective. So $(1)$ holds. Next, since $V \to W$ is flat and local, it is faithfully flat. So it suffices to show that $V \to W$ is almost unramified. Since $V$ and $W$ are microbial valuation rings, we have $K=V[\frac{1}{\varpi}]$ and $L=W[\frac{1}{\varpi}]$. There are several ways for proving the almost unramifiedness of $V \to W$, which we briefly mention below and indicate the sources.
\begin{enumerate}
\item[$\bullet$]
The almost purity theorem in prime characteristic. The proof of this fact uses only basic facts on the trace maps. The details are found in \cite[Theorem 3.5.28]{GR03}.

\item[$\bullet$]
Ramification theory and differentials. This proof seems to work for only valuation rings. The proof is found in \cite[Proposition 6.6.2]{GR03}.

\item[$\bullet$]
Galois theory of commutative rings. This idea was suggested in Andr\'e's paper (see \cite[Proposition 3.4.2]{An18}).
\end{enumerate}

So the theorem is proved in the equal characteristic $p>0$ case in an elementary way. Henceforth, we assume that $K$ has characteristic $0$. 

$(1)$: By the same reasoning as in the characteristic $p>0$ case, $W$ is a non-discrete valuation ring of rank one and complete in the valuation topology. So it suffices to prove that the Frobenius endomorphism on $W/pW$ is surjective. First, note that the Frobenius surjectivity on $V/pV$ implies that the residue field of $V$ is perfect. Noting that the strict henselization $V \to V^{\rm{sh}}$ is an ind-\'etale local extension, $V^{\rm{sh}}$ is a valuation ring of rank one and $F_{V^{\rm{sh}}}$ is surjective. Put $W':=V^{\rm{sh}} \otimes_{V} W$. Then $W'$ is a reduced and normal ring such that $W \to W'$ is ind-\'etale. In particular, the relative Frobenius map
\begin{equation}
\label{relativeFrob}
(W/pW)^{[F]} \otimes_{W/pW} W'/pW' \to (W'/pW')^{[F]}
\end{equation}
is an isomorphism. So $Q(W') \cong L_{1} \times \cdots \times L_{n}$ for some $n>0$ and $W' \cong W_{1} \times \cdots \times W_n$, where each $W_{i}$ is the integral closure of $V^{\rm{sh}}$ in $L_i$. Noticing the Frobenius surjectivity on $V^{\rm{sh}}/pV^{\rm{sh}}$, suppose that we have proved that the Frobenius endomorphism $F_{W_i}$ is surjective. Then $F_{W'}$ is surjective and in view of Lemma \ref{surjective} and $(\ref{relativeFrob})$, $F_{W}$ is surjective. So after replacing $V$ by the completion $\widehat{V^{\rm{sh}}}$, we may assume that the residue field of $V$ is algebraically closed. Let $\varpi$ be the element of $V$ provided by Proposition \ref{perf}. Set
$$
A:=V/\varpi V,~B:=W/\varpi W~\mbox{and}~C:=W/pW.
$$
Then the Frobenius endomorphism on $C$ induces an injection $B \hookrightarrow C^{[F]}$, and this injection induces the exact sequence of $A$-modules:
\begin{equation}
\label{exact1}
0 \to B \xrightarrow{\overline{F}_W} C^{[F]} \to N \to 0,
\end{equation}
where $N$ is the cokernel of $\overline{F}_W$. Our goal is to show that $N$ is almost zero.\footnote{One can also show that $N$ is almost zero by Proposition \ref{ValuationRing}. See \S5.} To this aim, let us fix an arbitrary element $b \in \fm_V$. Then $\overline{F}_W$ restricts to an injection $bB \to (b^pC)^{[F]}$, which gives another exact sequence:
\begin{equation}
\label{exact11}
0 \to bB \xrightarrow{\overline{F}_W} (b^pC)^{[F]} \to N_b \to 0
\end{equation}
with $N_b$ its cokernel. Combining $(\ref{exact1})$ and $(\ref{exact11})$, we obtain a commutative diagram:
$$
\begin{CD}
0 @>>> bB @>\overline{F}_W>> (b^pC)^{[F]} @>>> N_b @>>> 0 \\
@. @VVV @VVV @VVV \\
0 @>>> B @>\overline{F}_W>> C^{[F]} @>>> N @>>> 0 \\
\end{CD}
$$
whose vertical maps in the left and in the center are natural injections. By the snake lemme, it follows that the kernel and cokernel of the $A$-module map $N_b \to N$ are annihilated by $b$. Remark that we have $B \in \textbf{Mod}_{\{\fm_V\}}$ and $\lambda_\infty(b B)<\infty$ for any $b \in \fm_V$ by Proposition \ref{normalized1} and Proposition \ref{almostfinite}. Taking the length, we get
\begin{equation}
\label{exact2}
\lambda_\infty((b^pC)^{[F]})=\lambda_\infty(bB)+\lambda_\infty(N_b).
\end{equation}
It follow from Theorem \ref{Frobeniuspullback} that
\begin{equation}
\label{exact3}
\lambda_\infty((b^pC)^{[F]})=p^{-1} \cdot \lambda_\infty(b^pC).
\end{equation}
Since $C$ admits a filtration given by $\Fil^\bullet(W):=(\varpi^kC~|~0 \le k \le p)$, each of whose graded component of the associated graded module $\rm{gr}^\bullet(C)$ is isomorphic to $B$, we deduce from Proposition \ref{normalized1} that
\begin{equation}
\label{exact4}
\lambda_\infty(b^pC) \le p \cdot \lambda_\infty(bB).
\end{equation}
Combining $(\ref{exact3})$ and $(\ref{exact4})$, we obtain
$$
\lambda_\infty((b^pC)^{[F]}) \le \lambda_\infty(bB).
$$
Then this together with $(\ref{exact2})$ implies that $\lambda_\infty(N_b)=0$. That is to say, $\fm_VN_b=0$. On the other hand, both kernel and cokernel of $N_b \to N$ are annihilated by $b$ and $b \in \fm_V$ is arbitrary, we find that $\fm_V N=0$.

Since $V/\fm_V$ is algebraically closed and the unique maximal ideal of $W$ is $\fm_V W$ (this follows from the fact that the valuation of $V$ is non-discrete of rank one), it follows that $V/\fm_V \to W/\fm_V W$ is a trivial extension. In other words,
\begin{equation}
\label{exact5}
W=\fm_V W+V.
\end{equation}
Pick an arbitrary element $c \in W$. Then $(\ref{exact5})$ gives us an element $d \in V$ such that $c-d \in \fm_V W$. Let $\overline{c}-\overline{d}$ be the image of $c-d$ under $W \twoheadrightarrow W/pW$. Then since $\fm_V N=0$, it follows that $\overline{c}-\overline{d}$ is contained in the subring $\im(F_W) \subset W/pW$. However, as the Frobenius endomorphism on $V/pV$ is surjective, we have $\overline{d} \in V/pV \subset \im(F_W)$. Hence $\overline{c} \in \im(F_W)$ and $\im(F_W)=W/pW$, as desired.

$(2)$: By the same reasoning as in the positive characteristic case, it is true that $V \to W$ is faithfully flat. And by Proposition \ref{almostfinite}, $W$ is an almost finitely presented $V$-module. Hence it suffices to show that $V \to W$ is almost unramified. By the assertion $(1)$ and Proposition \ref{perf} (3), we have
\begin{equation}
\label{tiltingisom}
V/\varpi V \cong V^\flat/\varpi^\flat V^\flat~\mbox{and}~W/\varpi W \cong W^{\flat}/\varpi^{\flat}W^{\flat}.
\end{equation}
Then using $(\ref{tiltingisom})$, we see that $V^{\flat}/\varpi^{\flat}V^{\flat} \to W^{\flat}/\varpi^{\flat}W^{\flat}$ is almost finitely generated as a $V^\flat$-module homomorphism. So $V^{\flat} \to W^{\flat}$ is almost finitely generated by Proposition \ref{almostNAK}. We claim that $V^{\flat} \to W^{\flat}$ is an integral extension. To this aim, let $(V^\flat_{W^\flat})^*$ be the integral closure of $V^\flat$ in $W^\flat$. Then $(V^\flat_{W^\flat})^*$ is an integrally closed domain. Since $V^\flat$ is $\varpi^\flat$-adically complete, it is $\varpi^\flat$-adically Henselian and it follows that $(V^\flat_{W^\flat})^*$ is a local domain that is integral over $V^\flat$. Thus, $(V^\flat_{W^\flat})^*$ is a valuation ring of rank one and $(V^\flat_{W^\flat})^* \to W^\flat$ is almost finitely generated. It follows from Lemma \ref{almostintegral1} that $(V^\flat_{W^\flat})^* \to W^\flat$ is almost integral. Thus, it is an integral extension by Lemma \ref{almostintegral2}. We conclude that $V^{\flat} \to W^{\flat}$ is integral.

As $V^\flat$ is a perfect $\mathbf{F}_p$-algebra, $Q(V^{\flat}) \rightarrow Q(W^{\flat})$ is finite separable. Moreover, we remark that $W^{\flat}$ is the integral closure of $V^{\flat}$ in $Q(W^{\flat})$ since $W^{\flat}$ is a valuation ring, hence normal. Then the discussion in the beginning of the proof gives that $V^{\flat} \to W^{\flat}$ is almost finite \'etale. This fact, combined with $(\ref{tiltingisom})$, implies that $W/\varpi W\otimes_{V/\varpi V} W/\varpi W \to W/\varpi W$ is almost projective. On the other hand, the almost finite generatedness of $V \to W$ implies that $W/\varpi^{n} W \otimes_{V/\varpi^{n}V} W/\varpi^{n}W \to W/\varpi^{n}W$ is almost finitely presented for $n>0$. By \cite[Proposition 2.4.18 and Theorem 5.2.12(i)]{GR03}, $W/\varpi^{n} \otimes_{V/\varpi^{n}} W/\varpi^{n} \to W/\varpi^{n}$ is almost projective for any $n>0$.\footnote{One could alternatively apply \cite[Theorem 5.2.12(ii)]{GR03}. However, almost finite presentedness of $W \otimes_V W \to W$ allows us to use \cite[Theorem 5.2.12(i)]{GR03} whose proof is relatively simple.}  Applying \cite[Proposition 4.1]{NS20}, we find that $W\otimes_{V}W$ is $\varpi$-adically complete and separated, so $W$ is an almost projective $W \otimes_{V} W$-module in view of Proposition \ref{proj2}. It implies that $W$ is almost unramified over $V$, as desired.
\end{proof}

\section{Frobenius map on valuation rings}
\label{Valuationflat}
One of the main results of the paper \cite{RS16} states that if $V$ is a valuation domain such that $\mathbf{F}_p \subset V$, then the Frobenius endomorphism on $V$ is faithfully flat. We remark that any perfect $\mathbf{F}_p$-algebra has this property. In the mixed characteristic case, we have the following result.

\begin{proposition}
\label{ValuationRing}
Let $V$ be a valuation domain of mixed characteristic $p>0$. Assume that there is an element $\varpi \in V$ together with an element $u \in V^\times$ such that $\varpi^p=pu$. Then the Frobenius endomorphism on $V/pV$ induces a faithfully flat map $V/\varpi V \to (V/pV)^{[F]}$.
\end{proposition}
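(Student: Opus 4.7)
The plan is to produce the map, verify flatness by checking on principal ideals (every finitely generated ideal of $V/\varpi V$ is principal), and then deduce faithful flatness from the local structure of $V/\varpi V$.

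First I would construct the map explicitly. The Frobenius $V\to V$, $x\mapsto x^{p}$, sends $\varpi V$ into $\varpi^{p}V=pV$ (using $\varpi^{p}=pu$ with $u\in V^{\times}$), so it descends to a ring homomorphism
$$
F\colon V/\varpi V\longrightarrow V/pV,\qquad \bar{x}\mapsto\overline{x^{p}},
$$
and identifying the target with $(V/pV)^{[F]}$ gives precisely the $V/\varpi V$-algebra structure in question. Injectivity follows from the valuation: if $x^{p}\in pV=\varpi^{p}V$, then $p\,v(x)\geq p\,v(\varpi)$, and cancelling $p$ in the totally ordered abelian group $\Gamma_{V}$ yields $v(x)\geq v(\varpi)$, i.e.\ $\bar{x}=0$.

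For flatness I would use that every finitely generated ideal of $V/\varpi V$ is principal: lifting a finite generating set to $V$ produces a finitely generated, hence principal, ideal of the valuation domain $V$, whose image in $V/\varpi V$ is again principal. By the ideal criterion for flatness, it then suffices to verify
$$
\Tor_{1}^{V/\varpi V}\big((V/\varpi V)/(\bar{a}),\,V/pV\big)=0
$$
for every $\bar{a}\in V/\varpi V$, the only interesting case being $0<v(a)<v(\varpi)$. Setting $c:=\varpi/a\in V$ (so $ac=\varpi$), a short computation in the value group gives $\Ann_{V/\varpi V}(\bar{a})=(\bar{c})$; while under the Frobenius action on $V/pV$,
$$
\{m\in V/pV : a^{p}m=0\}=c^{p}V/pV=\Ann_{V/\varpi V}(\bar{a})\cdot V/pV,
$$
both equalities being consequences of the equivalence $v(a^{p}m)\geq v(\varpi^{p})\iff v(m)\geq p\,v(c)$. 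Thus the relevant $\Tor_{1}$ vanishes.

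For faithful flatness, since $V/\varpi V$ is local with maximal ideal $\bar{\fm}:=\fm_{V}/\varpi V$, it is enough to verify $V/pV\otimes_{V/\varpi V}V/\fm_{V}\ne 0$. Under the Frobenius action, $\bar{\fm}\cdot V/pV$ is generated by elements of the form $a^{p}m$ with $a\in\fm_{V}$, so it equals $(\fm_{V}^{[p]}+pV)/pV=\fm_{V}^{[p]}/pV$, where $\fm_V^{[p]}$ denotes the ideal generated by $p$-th powers (note that $pV=\varpi^{p}V\subseteq\fm_{V}^{[p]}$). Since $\fm_{V}^{[p]}\subseteq\fm_{V}\subsetneq V$, the quotient $(V/pV)/\bar{\fm}(V/pV)$ surjects onto $V/\fm_{V}\ne 0$, giving the required nonvanishing. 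No step presents a real obstacle: the whole argument reduces to manipulations in the totally ordered value group of $V$, with the single relation $\varpi^{p}=pu$ bridging the $\varpi$-adic and $p$-adic structures. Neither completeness nor any hypothesis on the rank of $V$ intervenes.
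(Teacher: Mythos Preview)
Your argument is correct and follows essentially the same route as the paper: reduce flatness to the vanishing of $\Tor_{1}^{V/\varpi V}(V/aV,(V/pV)^{[F]})$ for principal ideals (using that finitely generated ideals in a valuation ring are principal), write $\varpi=ac$, and verify the required vanishing by a valuation computation. The paper phrases the key step as an explicit isomorphism $a(V/\varpi V)\otimes_{V/\varpi V}(V/pV)^{[F]}\cong (a^{p}V/pV)^{[F]}$, whereas you compute the annihilator $\ker(\cdot\,a^{p})=c^{p}V/pV=\bar{c}\cdot(V/pV)$ directly; these are two ways of saying the same thing. Your additional remarks on the explicit construction of the map, its injectivity, and the faithful-flatness check are fine (the paper leaves faithful flatness implicit, relying on the fact that both rings are local and the map is local).
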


Before the proof, we apply this for Theorem \ref{AlmostPurityTheorem}(1), in particular the cokernel $N$ of $\overline{F_W}$ as in Theorem \ref{AlmostPurityTheorem}(1) is almost zero. Keeping notation as in Theorem \ref{AlmostPurityTheorem},  we know that $A \to B \to C^{[F]}$ is flat, so that we have $bA \otimes_A C^{[F]}=bC^{[F]}=(b^pC)^{[F]}$. By applying base change to $(\ref{exact1})$, we have an exact sequence $0 \to bB \to (b^pC)^{[F]} \to bA \otimes_A N \to 0$, and the length computation yields that $bA \otimes_A N$ and hence $N$ are almost zero.

\begin{proof}[Proof of Proposition \ref{ValuationRing}.]
This proof is inspired by \cite[Lemma 4.3.9]{GR07}. Let $I \subset V$ be an arbitrary nonzero finitely generated ideal. By \cite[Remark 6.1.12(i)]{GR03}, we may assume that $I=aV$, where $\varpi V \subset aV$.  By \cite[Theorem 7.8]{Mat86}, it suffices to show that $\Tor^{V/\varpi V}_1\big(V/I,(V/pV)^{[F]}\big)=0$. Then the short exact sequence $0 \to a(V/\varpi V) \to V/\varpi V \to V/aV \to 0$ induces an exact sequence:
\begin{eqnarray}
0 &\to& \Tor^{V/\varpi V}_1\big(V/aV,(V/pV)^{[F]}\big) \to a(V/\varpi V) \otimes_{V/\varpi V} (V/pV)^{[F]}  \xrightarrow{\phi} (V/pV)^{[F]} \nonumber \\
&\to& V/aV \otimes_{V/\varpi V} (V/pV)^{[F]} \to 0.  \nonumber
\end{eqnarray}
Therefore, it suffices to show that the map $\phi$ is injective. In other words, we show that the natural map
\begin{equation}
\label{isomo1}
a(V/\varpi V) \otimes_{V/\varpi V} (V/pV)^{[F]} \to a(V/pV)^{[F]} \cong (a^pV/pV)^{[F]}
\end{equation}
is an isomorphism. Since $\varpi V \subset aV$, there is an element $b \in V$ for which $\varpi=ab$. Then the natural surjection $V \xrightarrow{a} a(V/\varpi V)$ yields an isomorphism $V/bV \cong a(V/\varpi V)$. In view of the identities $\varpi^p=a^pb^p$ and $\varpi^p=pu$ with $u \in V^\times$, we get
\begin{eqnarray}
a(V/\varpi V) \otimes_{V/\varpi V} (V/pV)^{[F]} &\cong& V/bV \otimes_{V/\varpi V} (V/pV)^{[F]} \cong (V/pV)^{[F]}/b(V/pV)^{[F]} \nonumber \\
&\cong & (V/b^pV)^{[F]} \cong (a^pV/pV)^{[F]}, \nonumber
\end{eqnarray}
which is $(\ref{isomo1})$.
\end{proof}

This proposition is partly inspired by \cite[Theorem 3.2]{Lu19} which discusses the regularity of Noetherian rings via the Frobenius map. Indeed, Proposition \ref{ValuationRing} would follow if one knows that $V$ can be written as a colimit of regular subrings. This is expected to be true in general, as it is known as a consequence of Zariski's uniformization theorem. In \cite{Za40}, Zariski proved that any valuation ring containing $\mathbf{Q}$ is described as a colimit of smooth $\mathbf{Q}$-subalgebras. See also \cite{AD21}.

Suppose that $V=\varinjlim A_i$, where $A_i$ are regular. Without loss of generality, we may even assume that $\varpi \in A_i$. But then the Frobenius induces a faithfully flat map $A_i/\varpi A_i \to A_i/pA_i$ in view of \cite[Theorem 3.2]{Lu19}. The same holds for $\varinjlim A_i/\varpi A_i \to \varinjlim A_i/pA_i$. Therefore, $V/\varpi V \to V/pV$ is faithfully flat.

Finally we end this article with the following question.

\begin{question}
Is there a distinguished class of (non-Noetherian) rings $V$ for which the conclusion of Proposition \ref{ValuationRing} holds, other than the case that $V$ is a regular ring, a valuation ring, or a ring whose $p$-adic completion is an integral perfectoid ring?
\end{question}

\begin{acknowledgement}
The authors are grateful to the referee for reading the paper very carefully and providing many constructive comments. The second author was partially supported by JSPS Grant-in-Aid for Scientific Research(C) 18K03257.
\end{acknowledgement}

\end{document}